\newcommand\version{February 26, 2009}
\newtheorem{theorem}{Theorem}[section]
\newtheorem{proposition}[theorem]{Proposition}
\newtheorem{lemma}[theorem]{Lemma}
\theoremstyle{definition}
\newtheorem{example}[theorem]{Example}
\theoremstyle{remark}
\newtheorem{remark}[theorem]{Remark}
\numberwithin{equation}{section}
\newcommand{\cl}{\mathrm{cl}}
\renewcommand{\epsilon}{\varepsilon}
\renewcommand{\hom}{\mathrm{hom}}
\newcommand{\loc}{{\rm loc}}
\newcommand{\N}{\mathbb{N}}
\newcommand{\R}{\mathbb{R}}
\newcommand{\Sph}{\mathbb{S}}
\renewcommand{\phi}{\varphi}
\newcommand{\Z}{\mathbb{Z}}
\DeclareMathOperator{\curl}{curl}
\DeclareMathOperator{\dist}{dist}
\DeclareMathOperator{\dom}{dom}
\DeclareMathOperator{\im}{Im}
\DeclareMathOperator{\ran}{ran}
\DeclareMathOperator{\re}{Re}
\DeclareMathOperator{\spec}{spec}
\DeclareMathOperator{\sgn}{sgn}
\DeclareMathOperator{\tr}{tr}
\begin{document}

\title[Eigenvalue estimates --- \version]{Remarks on eigenvalue estimates \\and semigroup domination}

\author{Rupert L. Frank}
\address{Rupert L. Frank, Department of Mathematics,
Princeton University, Washington Road, Princeton, NJ 08544, USA}
\email{rlfrank@math.princeton.edu}

\thanks{\copyright\, 2009 by the author. This paper may be reproduced, in its entirety, for non-commercial purposes.\\
Support through DFG grant FR 2664/1-1 and U.S. National Science Foundation grant PHY 06 52854 is gratefully acknowledged.}

\begin{abstract}
We present an overview over recent results concerning semi-classical spectral estimates for magnetic Schr\"odinger operators. We discuss how the constants in magnetic and non-magnetic eigenvalue bounds are related and we prove, in an abstract setting, that any non-magnetic Lieb-Thirring-type inequality implies a magnetic Lieb-Thirring-type inequality with possibly a larger constant.
\end{abstract}

\maketitle

\section{Introduction}

In this paper we review and extend some recent results concerning spectral estimates for magnetic Schr\"odinger operators. Let $d\geq2$, $\mathbf{A}$ a vector potential on $\R^d$ corresponding to the magnetic field $\curl \mathbf{A}$ and $V$ a real-valued and (in some averaged sense) decaying function on $\R^d$. Under rather general assumptions on $\mathbf{A}$ and $V$ one can define the self-adjoint Schr\"odinger operator $(\mathbf{D}-\mathbf{A})^2+V$ in $L_2(\R^d)$ through the closure of the quadratic form $\int_{\R^d} \left(|(\mathbf{D}-\mathbf{A})u|^2 +V|u|^2\right)\,dx$ on $C_0^\infty(\R^2)$. Here $\mathbf{D}=-i\nabla$. Our starting point is the well-known bound
\begin{equation}\label{eq:diamaglowest}
\inf\spec\left( (\mathbf{D}-\mathbf{A})^2+V \right) \geq \inf\spec \left( -\Delta+V \right) \,,
\end{equation}
which follows from the diamagnetic inequality
\begin{equation}
 \label{eq:diamagintro}
|\exp(-t \left((\mathbf{D}-\mathbf{A})^2+V\right) ) f| \leq \exp(-t \left(-\Delta+V\right) ) |f|
\quad \text{a.e.}
\end{equation}
for all $f\in L_2(\R^d)$; see \cite{Si79} for a proof under mild conditions on $\mathbf{A}$ and $V$.

While \eqref{eq:diamaglowest} concerns only the lowest eigenvalue, in this paper we are interested in the number, the sum or, more generally, moments of the negative eigenvalues of $(\mathbf{D}-\mathbf{A})^2+V$. That is, we will consider $\tr((\mathbf{D}-\mathbf{A})^2+V)_-^\gamma=\sum_{j} |\lambda_j((\mathbf{D}-\mathbf{A})^2+V)|^\gamma$, where $\lambda_j(H)$ is the $j$-th negative eigenvalue (taking multiplicities into account) of $H$ and $\gamma\geq 0$ is a parameter. For $\gamma=0$ this sum represents the number of negative eigenvalues. In particular, we are interested in bounds of the form
\begin{equation}\label{eq:ltintro}
\tr\left((\mathbf{D}-\mathbf{A})^2+V\right)_-^\gamma \leq L_{\gamma,d} \int_{\R^d} V_-^{\gamma+d/2} \,dx
\end{equation}
with a constant $L_{\gamma,d}$ independent of $V$ and $\mathbf{A}$. For $\mathbf{A}\equiv0$ and $\gamma>0$ if $d=2$ and $\gamma\geq 0$ if $d\geq 3$ these estimates are due to Lieb-Thirring \cite{LiTh}, Cwikel \cite{Cw}, Lieb \cite{Li2} and Rozenblum \cite{Ro2}; see \cite{LaWe2,Hu} for further references, applications and the problem of sharp constants. Using \eqref{eq:diamagintro} the proofs in \cite{LiTh} and \cite{Li2} can be extended to non-trivial $\mathbf{A}$.

It is a remarkable fact that any known proof of \eqref{eq:ltintro} which allows for the inclusion of a magnetic field, yields the same value for the constant $L_{\gamma,d}$ in the magnetic as in the non-magnetic case. Moreover, for $\gamma\geq 3/2$, when the sharp constant in \eqref{eq:ltintro} is known \cite{LaWe1}, the sharp constant for the magnetic inequality coincides with the sharp constant for the non-magnetic inequality. We also note that the semi-classical approximation to the left side of \eqref{eq:ltintro} is given by the phase space integral
$$
\iint_{\R^d\times\R^d} \left(|\xi-\mathbf{A}(x)|^2 +V(x)\right)_-^\gamma \,\frac{dx\,d\xi}{(2\pi)^d} \,,
$$
which is independent of $\mathbf{A}$! These observations lead to the question, whether the validity of \eqref{eq:ltintro} for $\mathbf{A}\equiv 0$ immediately implies its validity, with the same constant, for non-trivial $\mathbf{A}$.

This problem would be trivial if one had an analog of \eqref{eq:diamaglowest} for moments of eigenvalues. This is \emph{wrong}, however! Avron, Herbst and Simon \cite{AvHeSi} and Lieb \cite{Li3} (in a discrete model) have shown that a conjectured diamagnetic inequality for the number and the sum of eigenvalues fails.

In Theorem \ref{diamag} below we will prove a positive result: If \eqref{eq:ltintro} is valid for $\mathbf{A}\equiv 0$ with constant $L_{\gamma,d}$, then \eqref{eq:ltintro} is valid for \emph{any} $\mathbf{A}$ with a constant $\tilde L_{\gamma,d}=R_{\gamma,d} L_{\gamma,d}$, where $R_{\gamma,d}$ is an explicit constant depending only on $\gamma$ and $d$. For $\gamma=0$ this is a theorem of Rozenblum \cite{Ro}, while a simpler result for $\gamma>0$ has recently appeared in \cite{hltsimple} (see also \cite{stability} for a result for $\gamma=1$ in the case of operators with discrete spectrum). As in \cite{Ro}, we will prove a much more general result which is valid for an arbitrary pair of operators in $L_2$-spaces related by a diamagnetic inequality of the form \eqref{eq:diamagintro}.

Because of its generality this result can be applied in settings where the magnetic versions of the inequalities were previously not known; see Examples \ref{ex:ltlog}, \ref{ex:ltsurf} and \ref{ex:ltstab}. Moreover, the explicit knowledge of the excess constant allowed us in \cite{stability} to prove stability of relativistic matter in magnetic fields for critical nuclear charges and for the physical value of the fine structure constant.


In the second part of this paper we will focus on the operator $H_\Omega(\mathbf{A})=(\mathbf{D}-\mathbf{A})^2$ defined on an open set $\Omega$ of finite measure with Dirichlet boundary conditions. The analog of \eqref{eq:ltintro} that we will study is
\begin{equation}
 \label{eq:blyintro}
\tr\left(H_\Omega(\mathbf{A})-\lambda\right)^\gamma_- \leq K_{\gamma,d}\, |\Omega|\, \lambda^{\gamma+d/2} \,.
\end{equation}
Following \cite{FrLoWe}, in Theorem \ref{blymagnonsharp} we derive explicit bounds on the constants $K_{\gamma,d}$ and in Theorem \ref{blyhom} we determine the \emph{sharp} value of the constant for $d=2$ if $\mathbf{A}$ is restricted to generate a homogeneous magnetic field. In particular, this will imply that for $0\leq\gamma<1$ P\'olya's conjecture is violated in the presence of a magnetic field and that for tiling domains $\Omega$ the constant in the magnetic case is \emph{strictly larger} than that in the non-magnetic estimate. This shows that, at least in the abstract setting of Theorem \ref{diamag}, one cannot expect the magnetic estimate to have the same constant as the non-magnetic estimate. In Subsection \ref{sec:homneu} we demonstrate that if Dirichlet boundary conditions are replaced by Neumann boundary condition (and if $\mathbf{A}$ is again restricted to generate a homogeneous magnetic field), then a sharp inequality for the eigenvalues holds in the reverse sense.

The material in Section \ref{sec:abs} is new (extending \cite{Ro,hltsimple}) and we provide complete proofs, while most of the material in Section \ref{sec:blymag} has previously appeared in \cite{ErLoVo,FrLoWe,FrHa,magdomain} and we only sketch the arguments. We hope that this presentation shows some common aspects behind the different results.

\subsection*{Acknowledgments} 
Most of the results reviewed here were obtained in collaborations with A. Hansson, A. Laptev, E. Lieb, M. Loss, S. Molchanov, R. Seiringer, and T. Weidl, and it is a great pleasure to thank them for many interesting discussions. I would also like to thank G. Rozenblum and M. Solomyak for providing me with references and the organizers of the conference `Spectral and Scattering Theory for Quantum Magnetic Systems' in Luminy for their kind invitation.


\section{An abstract result}\label{sec:abs}

In this section we shall discuss the question raised in the introduction in a more general setting. We formulate the main result in Subsection \ref{sec:absmain} and prove it in Subsection \ref{sec:absproof}. Subsections \ref{sec:absdisc} and \ref{sec:absex} contain an improvement under more restrictive assumptions and examples, respectively.

\subsection{Assumptions and main result}\label{sec:absmain}

Let $X$ be a sigma-finite measure space and $H$ and $M$ two self-adjoint, non-negative operators in $L_2(X)$ with corresponding quadratic forms $h$ and $m$. Our crucial assumption is that for any $f\in L_2(X)$ and any $t>0$ one has
\begin{equation}\label{eq:domination}
|\exp(-tM) f(x)| \leq (\exp(-tH)|f|)(x)
\qquad \text{a.e.}\ x\in X \,,
\end{equation}
that is, the semigroup of $H$ is positivity preserving and dominates that of $M$.

\begin{remark}
In applications assumption \eqref{eq:domination} can be verified in terms of the corresponding quadratic forms. Indeed, the inequality $\exp(-tH)f\geq 0$ for all non-negative $f\in L_2(X)$ is equivalent to the following two conditions,
\begin{equation}\label{eq:bd1}
\text{for any}\ u\in\dom h \ \text{one has}\ \re u, \im u\in\dom h \ \text{and}\ h[\re u,\im u]\in\R
\end{equation}
and
\begin{equation}\label{eq:bd2}
\text{for any real-valued}\ u\in\dom h \ \text{one has}\ |u|\in\dom h \ \text{and}\ h[|u|] \leq h[u] \,.
\end{equation}
Moreover, \eqref{eq:domination} is equivalent to \eqref{eq:bd1}, \eqref{eq:bd2} and
\begin{equation}\label{eq:kato}
\begin{split}
\text{for any}\ u\in\dom m \ \text{and}\ v\in\dom h \ \text{with}\ 0\leq v\leq |u| \ \text{one has}\\
 |u|\in\dom h \,,
v \,\sgn u \in\dom m \ \text{and}\ h[v,|u|] \leq \re m[v\sgn u, u] \,.
\end{split}
\end{equation}
(Here we use the definition $\sgn u(x):= u(x)/|u(x)|$ if $u(x)\neq 0$ and $\sgn u(x):=0$ if $u(x)=0$. Moreover, $h[\cdot,\cdot]$ denotes the sesqui-linear form associated to the quadratic form $h[\cdot]$ which is anti-linear in the first and linear in the second argument.) These equivalences are essentially due to Beurling and Deny and to Hess, Schrader, Uhlenbrock and Simon; see \cite[Sec. 2]{Ou} for proofs and references. (\cite[Thm. 2.21]{Ou} requires \eqref{eq:kato} for arbitrary (not only non-negative) $v$, but the same proof yields the stated result.)
\end{remark}

In order to define the perturbed operator, let $Y$ be a sigma-finite measure space and $G$ an (unbounded) operator from $L_2(X)$ to $L_2(Y)$ such that $\dom G\subset \dom h$ and such that the quadratic form $\| G u\|_{L_2(Y)}^2$ is form-bounded with respect to $H$ with relative form-bound zero. Under these condition the quadratic form $h[u] - \lambda \| G u\|_{L_2(Y)}^2$, $u\in\dom h$, is closed for any $\lambda>0$ and generates a self-adjoint operator in $L_2(X)$ which we will denote by $H- \lambda G^* G$. (Strictly speaking, this is an abuse of notation, since we do not require $G$ to be closable and $G^*$ to be densely defined  -- indeed, it is not in our Example \ref{ex:ltsurf}.)

In addition, we assume that $G$ has the following reality and positivity properties.
\begin{equation}\label{eq:greal}
\text{for any}\ u\in\dom G \ \text{one has}\ \re u\in\dom G \ \text{and} \ (G\re u, G\im u)\in\R
\end{equation}
and
\begin{equation}\label{eq:gkato}
\begin{split}
\text{for any}\ u,v \in\dom G \ \text{with}\ 0\leq v\leq |u| \ \text{one has}\
|u|\in\dom G \,, v\,\sgn u \in\dom G \,,\\  |Gu| = G|u| \ \text{a.e.} \ \text{and} \ 
(Gv,G|u|) = \re (G(v \sgn u),Gu) \,.
\end{split}
\end{equation}
It follows from \eqref{eq:kato} and \eqref{eq:gkato} that $\| G u\|_{L_2(Y)}^2$ is also form-bounded with respect to $M$ with relative form-bound zero and hence $m[u] - \lambda \| G u\|_{L_2(Y)}^2$, $u\in\dom m$, generates a self-adjoint operator $M-\lambda G^*G$ in $L_2(X)$.

\bigskip

The following theorem states that a power-like bound on the number ($\gamma=0$) or moments ($\gamma>0$) of negative eigenvalues of $H-\lambda G^*G$ implies a similar bound for those of $M-\lambda G^*G$ with a larger, but explicit and $M$-independent constant. For $\gamma=0$ this observation is due to Rozenblum \cite{Ro}. The proof given below modifies and extends his arguments to cover the case $\gamma>0$. A slightly less general result for $\gamma>0$ has appeared in \cite{hltsimple}.

\begin{theorem}\label{diamag}
Under the above assumptions suppose that for some $\gamma,\alpha \geq 0$ and $C>0$ one has
\begin{equation}\label{eq:diamagnegass}
\tr\left(H-\lambda G^*G\right)_-^\gamma \leq C \lambda^\alpha
\qquad \text{for all}\ \lambda\geq 0 \,.
\end{equation}
Then one has
\begin{equation}\label{eq:diamag}
\tr\left(M-\lambda G^*G \right)_-^\gamma \leq C \, \left(\frac e\alpha\right)^\alpha \Gamma(\alpha+1) \, \lambda^\alpha
\qquad \text{for all}\ \lambda\geq 0 \,.
\end{equation}
\end{theorem}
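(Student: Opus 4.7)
The plan is to prove a Laplace-transform comparison between the two operators and then recover the pointwise bound via monotonicity and an optimization that produces exactly the constant $(e/\alpha)^\alpha\Gamma(\alpha+1)$. Set $\Phi_A(\lambda):=\tr(A-\lambda G^*G)_-^\gamma$ and $I_A(\mu):=\int_0^\infty e^{-\mu\lambda}\Phi_A(\lambda)\,d\lambda$. Granted the key comparison $I_M(\mu)\le I_H(\mu)$ for all $\mu>0$, the hypothesis immediately gives $I_H(\mu)\le C\int_0^\infty e^{-\mu\lambda}\lambda^\alpha d\lambda = C\Gamma(\alpha+1)\mu^{-\alpha-1}$. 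Since $\Phi_M$ is non-decreasing in $\lambda$, a Chebyshev-type monotonicity argument yields
\[
\Phi_M(\lambda_0)\,\frac{e^{-\mu\lambda_0}}{\mu} = \Phi_M(\lambda_0)\int_{\lambda_0}^\infty e^{-\mu\lambda}\,d\lambda \le \int_{\lambda_0}^\infty e^{-\mu\lambda}\Phi_M(\lambda)\,d\lambda \le I_M(\mu) \le \frac{C\Gamma(\alpha+1)}{\mu^{\alpha+1}}.
\]
Thus $\Phi_M(\lambda_0)\le C\Gamma(\alpha+1)\mu^{-\alpha}e^{\mu\lambda_0}$ for every $\mu>0$, and minimizing the right-hand side at $\mu=\alpha/\lambda_0$ gives the value $(e/\alpha)^\alpha\lambda_0^\alpha$, producing exactly the claimed constant.

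The substantive step is to prove the Laplace comparison $I_M(\mu)\le I_H(\mu)$. By the Birman--Schwinger principle combined with layer cake,
\[
\Phi_A(\lambda) = \gamma\int_0^\infty s^{\gamma-1}\,n_+\!\left(1/\lambda,\,T_A(s)\right)\,ds, \qquad T_A(s):=G(A+s)^{-1}G^*.
\]
A direct calculation, using $\int_0^\infty e^{-\mu\lambda}n_+(1/\lambda,T)\,d\lambda = \mu^{-1}\sum_j e^{-\mu/\sigma_j(T)} = \mu^{-1}\tr g_\mu(T)$ with $g_\mu(\sigma):=e^{-\mu/\sigma}$, rewrites the Laplace transform as $I_A(\mu)=(\gamma/\mu)\int_0^\infty s^{\gamma-1}\tr g_\mu(T_A(s))\,ds$. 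The comparison then follows from the semigroup representation $T_A(s)=\int_0^\infty e^{-us}Ge^{-uA}G^*\,du$ by combining the semigroup domination \eqref{eq:domination} with the positivity properties \eqref{eq:greal} and \eqref{eq:gkato} of $G$: these give kernel-level domination of $Ge^{-uM}G^*$ by $Ge^{-uH}G^*$. Moreover $e^{\tau G^*G}$ preserves positivity, so Trotter's formula extends the domination to the perturbed operators, $|e^{-t(M-\lambda G^*G)}f|\le e^{-t(H-\lambda G^*G)}|f|$. A Hess--Schrader--Uhlenbrock-type argument then delivers $\tr g_\mu(T_M(s))\le \tr g_\mu(T_H(s))$, and integration in $s$ completes the comparison.

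The hard part is exactly this Laplace-transform comparison. The naive pointwise inequality $\Phi_M(\lambda)\le\Phi_H(\lambda)$ is false (Avron--Herbst--Simon), so the success of the Laplace-averaged version relies on the averaging effect together with the specific positivity structure of $G$; making this rigorous in the abstract setting requires extending Rozenblum's $\gamma=0$ argument by applying it at the level of the trace functional $\tr g_\mu(T_A(s))$ rather than the eigenvalue count $n_+$ alone. The advantage of this formulation is that the explicit constant $(e/\alpha)^\alpha\Gamma(\alpha+1)$ arises cleanly from the Laplace-transform optimization, with no ad hoc parameter choice.
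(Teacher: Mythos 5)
Your architecture is in fact the paper's own: the Laplace comparison $I_M(\mu)\le I_H(\mu)$ is the content of the paper's Lemma \ref{average} (with $t=\mu\lambda_0$ after rescaling $G\mapsto\sqrt{\lambda_0}\,G$), your Chebyshev step is its bound $N(1,A_M)\le e^t\tr\exp(-tA_M)$, and your optimization at $\mu=\alpha/\lambda_0$ is its choice $t=\alpha$. The reduction, the Birman--Schwinger/layer-cake rewriting $I_A(\mu)=(\gamma/\mu)\int_0^\infty s^{\gamma-1}\tr g_\mu(T_A(s))\,ds$, and the final constant are all correct.

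The gap is in your justification of the decisive inequality $\tr g_\mu(T_M(s))\le\tr g_\mu(T_H(s))$. You establish kernel-level domination of $T_M(s)$ by $T_H(s)$ and then assert that a Hess--Schrader--Uhlenbrock-type argument ``delivers'' the trace inequality; but domination $|T_M(s)f|\le T_H(s)|f|$ does \emph{not} imply $\tr\phi(T_M(s))\le\tr\phi(T_H(s))$ for increasing $\phi$. If it did, taking $\phi=\chi_{(1/\lambda,\infty)}$ would yield the pointwise bound $N(0,M-\lambda G^*G)\le N(0,H-\lambda G^*G)$, i.e.\ exactly the false diamagnetic comparison of Avron--Herbst--Simon that the averaging device exists to circumvent. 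Domination controls the operator norm and the Hilbert--Schmidt norm, not individual eigenvalues. The correct mechanism --- and the one the paper uses --- is to note that $g_\mu(T_A(s))$ coincides with $\exp(-\mu A)$ for $A$ the (unbounded) inverse of $T_A(s)$ on the closure of its range, i.e.\ it is itself a semigroup in $\mu$; one proves that \emph{this} semigroup for $M$ is dominated by the one for $H$ via resolvent domination of $(A+\tau)^{-1}$, which by the identity $P^*(A+\tau)^{-1}P=K(1+\tau K^*K)^{-1}K^*$, $K=G(H+s)^{-1/2}$, reduces to domination of $G(M+s+\tau G^*G)^{-1}G^*$ by $G(H+s+\tau G^*G)^{-1}G^*$ --- note the \emph{plus} sign, which is where the perturbed-operator domination you mention actually enters, in its Beurling--Deny/Ouhabaz resolvent form rather than through $e^{+\tau G^*G}$ (whose positivity preservation, with a positive exponent, unbounded generator and possibly non-densely-defined $G^*$, is not available in the abstract setting). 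Only then does one take traces, using $\tr\exp(-\mu A)=\|\exp(-\mu A/2)\|_2^2$ and the fact that domination implies the Hilbert--Schmidt norm inequality \cite[Thm.~2.13]{Si3}. You correctly locate the difficulty and name Rozenblum's argument as the fix, but the chain of implications you actually write down does not perform this step, and as stated it breaks at precisely the point where the theorem's content lies.
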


In \eqref{eq:diamag} we use the convention that $(e/\alpha)^\alpha=1$ if $\alpha=0$.

We do not claim that the excess factor $\left(e/ \alpha\right)^\alpha \Gamma(\alpha+1)$ is sharp. We shall see in Subsection \ref{sec:hom} below, however, that in general the estimate $\tr\left(M-\lambda G^*G \right)_-^\gamma \leq C' \lambda^\alpha$ holds only with a constant $C'$ which is \emph{strictly larger} than the (sharp) $C$ in \eqref{eq:diamagnegass}.

Here is an important special case for which some of the steps in the proof of Theorem~\ref{diamag} are simpler than in the general case.

\begin{example}
Let $X=Y$ with the same measure and let $V$ be a non-positive measurable function on $X$ such that multiplication by $V$ is form-bounded with respect to $H$ with form-bound zero. Then the operator $Gu :=\sqrt{V_-} u$ satisfies all the assumptions of this subsection.
\end{example}

Specializing even further we have

\begin{example}[Lieb-Thirring inequalities]\label{diamaglt}
 In the situation of the previous example assume that $X=\R^d$ with Lebesgue measure, $d\geq 2$, $H:=-\Delta$ and $V\in L_{\gamma+d/2}(\R^d)$. Then, as recalled in the introduction, \eqref{eq:diamagnegass} holds for $\gamma>0$ if $d=2$ and for $\gamma\geq 0$ if $d\geq 3$ with constant $C=L_{\gamma,d} \int_{\R^d} V_-^\alpha \,dx$ and $\alpha:=\gamma+d/2$. If $\mathbf{A}\in L_{2,\loc}(\R^d)$ and $M:=(\mathbf{D}-\mathbf{A})^{2}$, then the diamagnetic inequality \eqref{eq:diamagintro} and hence \eqref{eq:domination} hold. Therefore all the assumptions of this subsection are satisfied. While in this setting Theorem~\ref{diamag} does not lead to any new inequalities or improvement for the constants in the magnetic case, we will see in Subsection \ref{sec:absex} several examples modelled after this one where we indeed obtain new inequalities from Theorem \ref{diamag}.
\end{example}

\begin{remark}
 Similarly as in \cite{Ro} there is a more general statement which can be proved in the same way as Theorem \ref{diamag}. Namely, if \eqref{eq:diamagnegass} is replaced by the assumption that
\begin{equation*}
\tr\left(H-\lambda G^*G\right)_-^\gamma \leq \phi(\lambda)
\qquad \text{for all}\ \lambda\geq 0 \,,
\end{equation*}
for some non-negative, non-decreasing function $\phi$ of subexponential growth, then for all $t>0$
\begin{equation*}
\tr\left(M-\lambda G^*G\right)_-^\gamma \leq \frac{t e^t}{\lambda} \hat\phi(t/\lambda)
\qquad \text{for all}\ \lambda\geq 0 \,,
\end{equation*}
where $\hat\phi$ is the Laplace transform of $\phi$. In particular, if $\phi$ is regular in the sense that $\hat\phi(t)\leq C_\phi t^{-1} \phi(t^{-1})$ for some $C_\phi$ and all $t>0$, then
\begin{equation*}
\tr\left(M-\lambda G^*G\right)_-^\gamma \leq e C_\phi \phi(\lambda)
\qquad \text{for all}\ \lambda\geq 0 \,.
\end{equation*}
\end{remark}


\subsection{Proof of Theorem \ref{diamag}}\label{sec:absproof}

We denote by $N(-\tau,A)$ the dimension of the spectral subspace corresponding to the interval $(-\infty,-\tau)$ of a lower semi-bounded self-adjoint operator $A$. That is, for $-\tau\leq \inf\text{ess-}\spec A$, $N(-\tau,A)$ is the number of eigenvalues (counting multiplicities) less than $-\tau$. The key step in the proof of Theorem \ref{diamag} is the observation that $N(-\tau, M-G^*G)$ can be bounded from above by a constant times the average of $N(-\tau,H-\lambda G^*G)$ over all coupling constants $\lambda>0$. The measure $t e^{-\lambda t}\,d\lambda$ with respect to which we average depends on a parameter $t>0$.

\begin{lemma}\label{average}
Under the assumptions of Subsection \ref{sec:absmain} for any $\tau\geq 0$ and $t>0$ one has
\begin{equation}\label{eq:diamagnegproof}
N(-\tau,M- G^*G) \leq t e^t \int_0^\infty N(-\tau,H-\lambda G^*G) e^{-\lambda t}\,d\lambda \,.
\end{equation}
In particular, if $f$ is a non-negative, non-increasing and absolutely continuous function on $\R$ with $f(0)=0$, then
\begin{equation}\label{eq:averagetrace}
\tr f(M-G^*G) \leq t e^t \int_0^\infty \tr f(H-\lambda G^*G) e^{-\lambda t}\,d\lambda \,.
\end{equation}
\end{lemma}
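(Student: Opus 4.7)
The plan is to reduce the first inequality to a statement about the Birman--Schwinger operators
\[
A_\tau := G(H+\tau)^{-1}G^*, \qquad B_\tau := G(M+\tau)^{-1}G^*,
\]
which are bounded, non-negative, self-adjoint operators on $L_2(Y)$. The Birman--Schwinger principle gives $N(-\tau,M-G^*G)=n(1,B_\tau)$ and $N(-\tau,H-\lambda G^*G)=n(1/\lambda,A_\tau)$, where $n(s,K)$ denotes the number of eigenvalues of $K$ strictly larger than $s$. Evaluating the $\lambda$-integral on the right of \eqref{eq:diamagnegproof} by layer cake yields
\[
te^t\int_0^\infty n(1/\lambda,A_\tau)\,e^{-\lambda t}\,d\lambda \;=\; e^t\sum_{k}e^{-t/\mu_k(A_\tau)} \;=\; e^t\tr\phi(A_\tau),
\]
where $\phi(x):=e^{-t/x}$ for $x>0$ and $\phi(0):=0$, and the sum runs over the nonzero eigenvalues $\mu_k(A_\tau)$. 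Hence \eqref{eq:diamagnegproof} is equivalent to $n(1,B_\tau)\le e^t\tr\phi(A_\tau)$.

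The elementary pointwise inequality $\mathbf{1}_{x>1}\le e^{t(1-1/x)}=e^t\phi(x)$, valid for $x\ge 0$, applied to the spectral decomposition of $B_\tau$ gives at once $n(1,B_\tau)\le e^t\tr\phi(B_\tau)$, so it remains to establish the trace comparison $\tr\phi(B_\tau)\le\tr\phi(A_\tau)$; this is where the semigroup domination \eqref{eq:domination} enters and is the main technical obstacle. My plan is to exploit a Dyson expansion: writing the resolvents as Laplace integrals $(M+\tau)^{-1}=\int_0^\infty e^{-s(M+\tau)}\,ds$ and iterating, one obtains, with $V:=G^*G$,
\[
\tr B_\tau^n = \int_{(0,\infty)^n} e^{-\tau(s_1+\cdots+s_n)}\,\tr(Ve^{-s_1M}\cdots Ve^{-s_nM})\,ds_1\cdots ds_n,
\]
and similarly for $\tr A_\tau^n$ with $H$ in place of $M$. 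The positivity properties \eqref{eq:greal}--\eqref{eq:gkato} force both $G$ and $G^*$ to preserve non-negativity of functions, so $V$ has a non-negative integral kernel; combined with the pointwise kernel bound $|k_M^s(x,y)|\le k_H^s(x,y)$ contained in \eqref{eq:domination}, this gives $0\le\tr(Ve^{-s_1M}\cdots Ve^{-s_nM})\le\tr(Ve^{-s_1H}\cdots Ve^{-s_nH})$, and hence $\tr B_\tau^n\le\tr A_\tau^n$ for every integer $n\ge 1$. Upgrading these moment inequalities to the trace bound for $\phi$ is the delicate point, since $\phi$ is neither operator monotone nor representable by a power series with non-negative coefficients; the plan is to execute the same kernel/Dyson comparison directly inside the Laplace-transform identity $\tr\phi(B_\tau)=t\int_0^\infty N(-\tau,M-\lambda G^*G)\,e^{-\lambda t}\,d\lambda$ and its analogue for $A_\tau$, thereby comparing the $\lambda$-integrated counting functions in an averaged sense compatible with the failure of the pointwise diamagnetic inequality for eigenvalue counts.

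For the trace version \eqref{eq:averagetrace}, I would argue by layer cake: any non-negative, non-increasing, absolutely continuous $f$ with $f(0)=0$ admits the representation $f(x)=\int_0^\infty\mathbf{1}_{x<-\tau}(-f'(-\tau))\,d\tau$, and hence $\tr f(X)=\int_0^\infty N(-\tau,X)(-f'(-\tau))\,d\tau$ for any lower-bounded self-adjoint $X$. Substituting the already proven counting bound \eqref{eq:diamagnegproof} for each $\tau$ and exchanging the two integrations by Fubini gives \eqref{eq:averagetrace}.
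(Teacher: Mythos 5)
Your reduction via the Birman--Schwinger principle, the identity $te^t\int_0^\infty n(1/\lambda,A_\tau)e^{-\lambda t}\,d\lambda=e^t\tr\phi(A_\tau)$ with $\phi(x)=e^{-t/x}$, the elementary bound $n(1,B_\tau)\le e^t\tr\phi(B_\tau)$, and the layer-cake/Fubini derivation of \eqref{eq:averagetrace} from \eqref{eq:diamagnegproof} all match the paper's proof (the paper phrases things in terms of the inverses $A_H=(\tilde K_H\tilde K_H^*)^{-1}$, so that $\tr\phi(B_\tau)=\tr e^{-tA_M}$, but this is cosmetic). The entire content of the lemma is therefore concentrated in the comparison $\tr\phi(B_\tau)\le\tr\phi(A_\tau)$, and this is exactly where your proposal has a genuine gap. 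The moment inequalities $\tr B_\tau^n\le\tr A_\tau^n$ cannot yield it: take $A$ with a single eigenvalue $2$ and $B$ with two eigenvalues equal to $1$; then $\tr B^n=2\le 2^n=\tr A^n$ for all $n\ge1$, yet $\tr\phi(B)=2e^{-t}>e^{-t/2}=\tr\phi(A)$ for small $t$. You correctly identify this as the delicate point, but your fallback --- ``execute the same comparison inside the Laplace-transform identity'' --- is circular: that identity merely re-expresses $\tr\phi(B_\tau)$ as the $\lambda$-average of $N(-\tau,M-\lambda G^*G)$, i.e.\ as the very quantity the lemma is supposed to bound.

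What closes the gap in the paper (following Rozenblum) is a \emph{pointwise domination} of operators on $L_2(Y)$, not a comparison of traces of powers. One first shows that $(M+\lambda G^*G)^{-1}$ is dominated by $(H+\lambda G^*G)^{-1}$ for every $\lambda>0$, using \eqref{eq:kato} and \eqref{eq:gkato} via the Beurling--Deny/Ouhabaz criteria; sandwiching with the positivity-preserving maps $G$ and $G^*$ and using the identity $G(H+\lambda G^*G)^{-1}G^*=K_H(1+\lambda K_H^*K_H)^{-1}K_H^*$ with $K_H:=GH^{-1/2}$, one obtains that $P_M^*(A_M+\lambda)^{-1}P_M$ is dominated by $P_H^*(A_H+\lambda)^{-1}P_H$ for all $\lambda>0$, hence (iterating resolvents) that $T_M(t)=P_M^*e^{-tA_M}P_M$ is dominated by $T_H(t)$. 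The trace inequality then follows from \cite[Thm.~2.13]{Si3}: $\tr e^{-tA_M}=\|T_M(t/2)\|_2^2\le\|T_H(t/2)\|_2^2=\tr e^{-tA_H}$. If you want to salvage your Dyson-expansion idea, it must be run at the level of integral kernels so as to produce this operator domination, not at the level of $\tr B_\tau^n$ versus $\tr A_\tau^n$.
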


Assuming this lemma for the moment we can easily complete the

\begin{proof}[Proof of Theorem \ref{diamag}]
We assume that $\gamma>0$, the argument for $\gamma=0$ being similar. By Lemma \ref{average} with $G$ replaced by $\sqrt\lambda G$ (which satisfies the same assumptions as $G$) and $f(s)=s_-^\gamma$ one has for any $t>0$
\begin{align*}
\tr\left(M-\lambda G^*G\right)_-^\gamma
\leq t e^t \int_0^\infty \tr\left(H-\mu \lambda G^*G\right)_-^\gamma e^{-\mu t}\,d\mu \,.
\end{align*}
By assumption \eqref{eq:diamagnegass} the right hand side can be bounded from above by
\begin{equation*}
t e^t C  \lambda^\alpha \int_0^\infty \mu^\alpha e^{-\mu t}\,d\mu 
= \lambda^\alpha t^{-\alpha} e^t \Gamma(\alpha+1) C \,,
\end{equation*}
and the assertion follows by choosing $t=\alpha$.
\end{proof}

The following proof of Lemma \ref{average} relies on some ideas from \cite{Ro}.

\begin{proof}[Proof of Lemma \ref{average}]
Since \eqref{eq:domination} remains valid with $H+\tau$ and $M+\tau$ in place of $H$ and $M$ we need only consider $\tau=0$. Moreover, by a limiting argument (which is only necessary if originally $\tau=0$) we may assume that $H$ and $M$ are positive definite. We consider the subspaces $\mathfrak h_H := \overline{\ran G H^{-1/2}}$ and $\mathfrak h_M := \overline{\ran G M^{-1/2}}$ of $L_2(Y)$ and denote the corresponding orthogonal projections by $P_H$ and $P_M$. By our assumptions the operators $\tilde K_H :=P_H GH^{-1/2}$ and $\tilde K_M := P_M G M^{-1/2}$ acting from $L_2(X)$ to $\mathfrak h_H$ and $\mathfrak h_M$, respectively, are bounded. The Birman-Schwinger principle implies that
\begin{align}\label{eq:bs}
N(0, H-\lambda G^*G) & = n(\lambda^{-1}, \left(G H^{-1/2} \right) \left(G H^{-1/2} \right)^* )=n(\lambda^{-1}, \left(G H^{-1/2} \right)^* \left(G H^{-1/2} \right) ) \notag \\
& = n(\lambda^{-1}, \tilde K_H^* \tilde K_H) = n(\lambda^{-1}, \tilde K_H \tilde K_H^*)
\end{align}
and similarly for $M$. Here $n(\lambda^{-1},A)$ denotes the dimension of the spectral subspace corresponding to the interval $(\lambda^{-1},\infty)$ of a self-adjoint operator $A$. Since $\tilde K_H$ and $\tilde K_M$ have dense ranges, their adjoints $\tilde K_H^*$ and $\tilde K_M^*$ have trivial kernels, and hence the operators $\tilde K_H \tilde K_H^*$ and $\tilde K_M \tilde K_M^*$ have self-adjoint (unbounded) inverses $A_H$ and $A_M$ in $\mathfrak h_H$ and $\mathfrak h_M$, respectively.
In terms of these operators the Birman-Schwinger principle \eqref{eq:bs} can be rewritten as
\begin{equation}\label{eq:bs2}
N(0, H-\lambda G^*G) = N(\lambda, A_H) \,,
\qquad 
N(0, M-\lambda G^*G) = N(\lambda, A_M) \,.
\end{equation}
We define the operators $T_H(t):= P_H^* \exp(-tA_H)P_H$ and $T_M(t):= P_M^* \exp(-tA_M)P_M$ on $L_2(Y)$ and claim that for all $t>0$ and all $f\in L_2(Y)$ one has
\begin{equation}\label{eq:domproof}
\left|\left(T_M(t)f\right)(y)\right| \leq \left(T_H(t) |f|\right)(y)
\qquad \text{a.e.}\ y\in Y \,.
\end{equation}
Accepting this for the moment, we deduce using \cite[Thm. 2.13]{Si3} that
$$
\tr_{L_2(Y)} T_M(t) = \|T_M(t/2)\|_2^2 \leq \| T_H(t/2) \|_2^2= \tr_{L_2(Y)} T_H(t) \,,
$$
where $\|\cdot\|_2$ denotes the Hilbert-Schmidt norm.
This together with \eqref{eq:bs2} implies
\begin{align*}
N(0, M-G^*G) & = N(1, A_M) \leq e^t \tr_{\mathfrak h_M} \exp(-t A_M) = e^t  \tr_{L_2(Y)} T_M(t) \\
& \leq e^t  \tr_{L_2(Y)} T_H(t) = e^t \tr_{\mathfrak h_H} \exp(-t A_H) \\
& = t e^t \int_0^\infty N(\lambda, A_H) e^{-t\lambda} \,d\lambda
= t e^t \int_0^\infty N(0,H-\lambda G^*G) e^{-t\lambda} \,d\lambda \,,
\end{align*}
which is the first assertion. Estimate \eqref{eq:averagetrace} follows by writing
$$
\tr f(A) = -\int_0^\infty f'(-\tau) N(-\tau,A)\,d\tau
$$
for any self-adjoint operator $A$ from \eqref{eq:diamagnegproof} by Fubini's theorem.

It remains to prove \eqref{eq:domproof}. Since this fact is proved in \cite{Ro}, we only sketch the major steps in the argument. We begin by showing that for any $t>0$, $T_H(t)$ or, what is the same, for any $\tau>0$, $P_H^*(A_H+\tau)^{-1} P_H$ is positivity preserving. As in \cite{Ro} one easily verifies that
$$
P_H^* (A_H+\tau)^{-1} P_H = K_H (1+\tau K_H^*K_H)^{-1} K_H^*
$$
with $K_H:=GH^{-1/2}$. Writing $K_H=\text{s-}\lim_{\epsilon\to 0}G \exp(-\epsilon H) H^{-1/2}$ and noting that $G \exp(-\epsilon H)$ and its adjoint are positivity preserving (see \eqref{eq:gkato}), we are left with proving that
$$
H^{-1/2} (1+\tau K_H^*K_H)^{-1} H^{-1/2} = (H+\tau G^*G)^{-1}
$$
is positivity preserving. Using that $\exp(-tH)$ is positivity preserving and recalling \eqref{eq:greal} and \eqref{eq:gkato} we deduce this from the Beurling-Deny conditions; see, e.g., \cite[Thm.~2.7]{Ou}.

Finally, we prove that for any $t>0$, $T_H(t)$ dominates $T_M(t)$, or equivalently, that for any $\tau>0$, 
$P_H^*(A_H+\tau)^{-1} P_H$ dominates $P_M^*(A_M+\tau)^{-1} P_M$. Arguing as before (see also \cite{Ro}) this will follow from the fact that $(H+\tau G^* G)^{-1}$ dominates $(M+\tau G^* G)^{-1}$, which in turn can be deduced from the form version of Kato's inequality using \eqref{eq:kato} and \eqref{eq:gkato}; see, e.g., \cite[Thm.~2.21]{Ou}. This concludes the proof of \eqref{eq:domproof} and hence that of Lemma~\ref{average}.
\end{proof}


\subsection{An improvement in the case of discrete spectrum}\label{sec:absdisc}

In this subsection we show that in the special case where $X=Y$ (with the same measure) and $G=I$ the excess factor $\left(e/\alpha\right)^\alpha \Gamma(\alpha+1)$ in Theorem \ref{diamag} can be improved for $\gamma>0$. Note that in this case \eqref{eq:diamagnegass} (or \eqref{eq:diamagass} below) requires $H$ to have purely discrete spectrum.

\begin{theorem}\label{diamagdisc}
Under the assumptions of Subsection \ref{sec:absmain} suppose that for some $\alpha\geq\gamma\geq 0$ and $C>0$ one has
\begin{equation}\label{eq:diamagass}
\tr(H-\lambda)_-^\gamma \leq C \lambda^\alpha
\qquad \text{for all}\ \lambda\geq 0 \,.
\end{equation}
Then one has
\begin{equation}\label{eq:diamagdisc}
\tr(M-\lambda)_-^\gamma \leq C \, \left(\frac\gamma e\right)^\gamma \left(\frac e\alpha\right)^\alpha \frac{\Gamma(\alpha+1)}{\Gamma(\gamma+1)} \, \lambda^\alpha
\qquad \text{for all}\ \lambda\geq 0 \,.
\end{equation}
\end{theorem}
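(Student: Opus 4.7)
The idea is to bypass the Birman--Schwinger detour of Lemma \ref{average} entirely: when $G=I$ the semigroup domination \eqref{eq:domination} holds \emph{directly} between $M$ and $H$, so one may compare traces of $e^{-tM}$ and $e^{-tH}$ and then sandwich $\tr(\cdot-\lambda)_-^\gamma$ between two elementary Laplace-type bounds, optimizing over the parameter $t$ at the end.

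Concretely, the plan has three ingredients. First, for fixed $t>0$ the pointwise optimization $\max_{x\geq 0} x^\gamma e^{-tx}=(\gamma/(et))^\gamma$ yields
\[
(\lambda-\mu)_+^\gamma \leq \left(\frac{\gamma}{et}\right)^\gamma e^{t(\lambda-\mu)} \qquad \text{for every } \mu\in\R,
\]
so, summing over the eigenvalues $\mu_j$ of $M$,
\[
\tr(M-\lambda)_-^\gamma \leq \left(\frac{\gamma}{et}\right)^\gamma e^{t\lambda}\,\tr e^{-tM}.
\]
Second, the Beurling--Deny/Hess--Schrader--Uhlenbrock argument already used inside Lemma \ref{average} (the Hilbert--Schmidt bound from \cite[Thm.~2.13]{Si3}) upgrades the pointwise semigroup domination to the trace inequality
\[
\tr e^{-tM} \leq \tr e^{-tH} \qquad (t>0).
\]
Third, the Laplace identity
\[
e^{-t\nu}=\frac{t^{\gamma+1}}{\Gamma(\gamma+1)}\int_0^\infty (\lambda-\nu)_+^\gamma e^{-t\lambda}\,d\lambda \qquad (\nu\geq 0),
\]
together with $H\geq 0$, Fubini and hypothesis \eqref{eq:diamagass}, gives
\[
\tr e^{-tH}=\frac{t^{\gamma+1}}{\Gamma(\gamma+1)}\int_0^\infty \tr(H-\lambda)_-^\gamma\,e^{-t\lambda}\,d\lambda \leq C\,\frac{\Gamma(\alpha+1)}{\Gamma(\gamma+1)}\,t^{\gamma-\alpha}.
\]
Assembling the three bounds and then minimizing the resulting expression
\[
\tr(M-\lambda)_-^\gamma \leq C\left(\frac{\gamma}{e}\right)^\gamma\frac{\Gamma(\alpha+1)}{\Gamma(\gamma+1)}\,t^{-\alpha}e^{t\lambda}
\]
over $t>0$ at $t=\alpha/\lambda$ produces exactly the constant in \eqref{eq:diamagdisc}.

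The only genuinely delicate point is the passage from semigroup to trace domination, since we do not have a priori kernel bounds on $e^{-tM}$ or $e^{-tH}$. This, however, is precisely the step already carried out in the proof of Lemma \ref{average}: the inequality $|e^{-tM}f|\leq e^{-tH}|f|$ implies, via $\|T\|_2^2=\tr T^*T$ applied at time $t/2$, that the Hilbert--Schmidt norms are ordered, hence so are the traces of the positive operators $e^{-tM}=T_M(t/2)^*T_M(t/2)$ and $e^{-tH}=T_H(t/2)^*T_H(t/2)$. The finiteness of $\tr e^{-tH}$ follows a posteriori from the computation above, using the standing assumption $\alpha\geq\gamma$ that makes $t^{\gamma-\alpha}$ integrable behavior immaterial. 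The case $\gamma=0$ requires the conventions $0^0=1$ and $(\lambda-\mu)_+^0=\mathbf{1}_{\mu<\lambda}$ but otherwise goes through without change, recovering the constant $(e/\alpha)^\alpha\Gamma(\alpha+1)$ of Theorem \ref{diamag}.
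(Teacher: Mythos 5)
Your proposal is correct and follows essentially the same route as the paper's own proof: the trace domination $\tr e^{-tM}\leq\tr e^{-tH}$ via the Hilbert--Schmidt argument of \cite[Thm.~2.13]{Si3}, the Laplace-transform identity converting \eqref{eq:diamagass} into $\tr e^{-tH}\leq C\,\Gamma(\alpha+1)\Gamma(\gamma+1)^{-1}t^{\gamma-\alpha}$, the elementary pointwise bound $(\mu-\lambda)_-^\gamma\leq(\gamma/(et))^\gamma e^{t(\lambda-\mu)}$, and the final optimization at $t=\alpha/\lambda$. All steps and constants check out.
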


In \eqref{eq:diamagdisc} we use the convention that $(\gamma/e)^\gamma=1$ if $\gamma=0$ and similarly for $\alpha=0$. As in Theorem \ref{diamag} we do not claim that the excess factor $\left(\frac\gamma e\right)^\gamma \left(\frac e\alpha\right)^\alpha \frac{\Gamma(\alpha+1)}{\Gamma(\gamma+1)}$ in \eqref{eq:diamagdisc} is sharp, but we have examples (for $0\leq \gamma<1$, $\alpha=\gamma+1$), where it is larger than one; see Subsection \ref{sec:hom} below.

\begin{remark}
 As we shall see in Lemma \ref{goingdown}, \eqref{eq:diamagass} implies $N(\lambda)\leq C' \lambda^{\alpha-\gamma}$ for some constant $C'$. Conversely, the integration argument of Aizenman-Lieb \cite{AiLi} shows that $N(\lambda)\leq C' \lambda^{\alpha-\gamma}$ implies \eqref{eq:diamagass} for some $C$.
\end{remark}

\begin{proof}
According to \cite[Thm. 2.13]{Si3} the domination property \eqref{eq:domination} yields
\begin{equation}\label{eq:diamagproof}
\tr\exp(-tM) = \|\exp(-tM/2)\|_2^2\leq \|\exp(-tH/2)\|_2^2 = \tr\exp(-tH) \,,
\end{equation}
where $\|\cdot\|_2$ denotes the Hilbert-Schmidt norm. In order to estimate the right side from above we use the elementary formula $\Gamma(\gamma+1) e^{-\lambda} = \int_0^\infty (\lambda-\mu)_-^\gamma e^{-\mu}\,d\mu$, which gives
\begin{equation*}\label{eq:semigroup}
\tr\exp(-tH) = \frac{t^{\gamma+1}}{\Gamma(\gamma+1)} \int_0^\infty \tr(H-\mu)_-^\gamma e^{-t\mu}\,d\mu \,.
\end{equation*}
Hence by assumption \eqref{eq:diamagass} we have
\begin{align*}
\tr\exp(-tH)
\leq \frac{C t^{\gamma+1}}{\Gamma(\gamma+1)} \int_0^\infty \mu^\alpha e^{-t\mu}\,d\mu
= C \, \frac{\Gamma(\alpha+1)}{\Gamma(\gamma+1)}\, t^{-\alpha+\gamma} \,.
\end{align*}
In order to estimate the left side of \eqref{eq:diamagproof} from below we use that $\lambda_-^\gamma \leq (\gamma/e)^\gamma e^{-\lambda}$, which implies that
\begin{equation*}
\exp(-t M) \geq \left(\frac e \gamma \right)^{\gamma}  t^{\gamma} e^{-t\lambda} \tr(M-\lambda)_-^\gamma\,.
\end{equation*}
Combining these two estimates with \eqref{eq:diamagproof} we find
\begin{equation*}
\tr(M-\lambda)_-^\gamma 
\leq C \left(\frac \gamma e\right)^{\gamma} \frac{\Gamma(\alpha+1)}{\Gamma(\gamma+1)} e^{t\lambda} t^{-\alpha} \,.
\end{equation*}
We optimize the right side by choosing $t=\alpha/\lambda$ and obtain the assertion.
\end{proof}


\subsection{Examples}\label{sec:absex}

\subsubsection{An endpoint estimate in 2D}\label{ex:ltlog}

It is well-known that the Lieb-Thirring inequality \eqref{eq:ltintro} does \emph{not} hold for $\gamma=0$ if $d=2$. For $\mathbf{A}\equiv 0$ a replacement was recently found by Kova\v r\'{\i}k, Vugalter and Weidl \cite{KoVuWe}. It involves the quantity $\tr f(l^2(-\Delta+V))$ with
$$
f(s):=
\begin{cases}
1& \text{if}\ s\leq - e^{-1} \,, \\
|\ln |s||^{-1} & \text{if}\ - e^{-1}< s<0 \,, \\
0& \text{if}\ s\geq 0 \,.
\end{cases}
$$
In \cite{hltsimple} we used a version of Theorem \ref{diamag} to extend the estimate to the magnetic case.

\begin{theorem}
Let $d=2$ and $f$ as above. Then there exists a constant $L>0$ and for any $q>1$ a constant $L_q>0$ such that for all $l>0$, $V\in L_1(\R^2,\log_+(l/|x|)dx)\cap L_1(\R_+,rdr, L_q(\Sph))$ and $\mathbf{A}\in L_{2,\loc}(\R^2,\R^2)$ one has
$$
\tr f\left(l^2((\mathbf{D}-\mathbf{A})^2+V)\right)
\leq L \int_{|x|<l} \! V(x)_- \log\frac l{|x|} \,dx + L_q \int_0^\infty \!\!\left(\int_{\Sph} V(r\omega)_-^q \,d\omega \right)^{1/q} \!r\,dr \,.
$$
\end{theorem}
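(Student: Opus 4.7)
The plan is to apply Lemma~\ref{average} to reduce the magnetic inequality to its non-magnetic counterpart due to Kova\v r\'\i k, Vugalter and Weidl \cite{KoVuWe}. First I would reduce to non-positive potentials: since $f$ is non-increasing and vanishes on $[0,\infty)$, the operator inequality $(\mathbf{D}-\mathbf{A})^2+V\geq (\mathbf{D}-\mathbf{A})^2-V_-$ gives $\tr f(l^2((\mathbf{D}-\mathbf{A})^2+V))\leq \tr f(l^2((\mathbf{D}-\mathbf{A})^2-V_-))$. Writing $W:=V_-$, I would take $X=Y=\R^2$, $H:=-l^2\Delta$, $M:=l^2(\mathbf{D}-\mathbf{A})^2$, and $G$ the multiplication operator by $l\sqrt{W}$. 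The diamagnetic inequality \eqref{eq:diamagintro} supplies \eqref{eq:domination}, and multiplication by a non-negative function trivially satisfies \eqref{eq:greal} and \eqref{eq:gkato}; the required relative form-boundedness of $W$ with respect to $-\Delta$ in two dimensions (with bound zero) follows from the integrability hypotheses on $V$ by a standard argument.

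The function $f$ is non-negative, non-increasing, continuous on $\R$, piecewise $C^1$ (hence absolutely continuous), and satisfies $f(0)=0$. Thus Lemma~\ref{average} applies and gives, for every $t>0$,
\begin{equation*}
\tr f\bigl(l^2(\mathbf{D}-\mathbf{A})^2 - l^2 W\bigr)
\leq t e^t\int_0^\infty \tr f\bigl(-l^2\Delta - \lambda l^2 W\bigr)\, e^{-\lambda t}\, d\lambda.
\end{equation*}
Under the integral I would apply the non-magnetic theorem of \cite{KoVuWe} to the potential $-\lambda W$. Its right-hand side is homogeneous of degree one in $V_-$ (directly for the weighted $L_1$ term, and after extracting a $q$-th root for the mixed-norm term), so the bound scales linearly in $\lambda$ and equals $\lambda\,\Phi(W)$, where $\Phi(W)$ is the right-hand side of the theorem to be proved. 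Using $\int_0^\infty \lambda e^{-\lambda t}\,d\lambda = t^{-2}$, the combined prefactor becomes $te^t/t^2=e^t/t$, minimized at $t=1$ with value $e$. Absorbing this factor into $L$ and $L_q$ yields the claimed inequality.

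The main technical point I expect is the verification of the relative form-boundedness of $W$ with bound zero under the rather weak integrability hypotheses of the theorem, which in two dimensions is not automatic but is the same issue already resolved in \cite{KoVuWe}, so I would cite that paper. The linear scaling of the non-magnetic bound in the coupling constant, which is precisely what makes the Laplace integral in Lemma~\ref{average} converge, is built into the explicit form of the Kova\v r\'\i k--Vugalter--Weidl right-hand side and requires no further work. Once these two points are in place, the entire argument reduces to the one-line substitution and the elementary optimization displayed above.
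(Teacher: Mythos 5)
Your proposal is correct and follows essentially the same route as the paper: reduce to $V\leq 0$, apply Lemma~\ref{average} with $G^*G$ equal to multiplication by $l^2V_-$, invoke the Kova\v r\'{\i}k--Vugalter--Weidl bound under the Laplace integral using its degree-one homogeneity in the potential, compute $\int_0^\infty\mu e^{-\mu t}\,d\mu=t^{-2}$, and set $t=1$. The only cosmetic difference is that you explicitly minimize $e^t/t$ while the paper simply picks $t=1$; the verification details you flag (form-boundedness, absolute continuity of $f$) are handled in the paper by reference to Example~\ref{diamaglt} and \cite{KoVuWe}, exactly as you propose.
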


\begin{proof}
As explained in Example \ref{diamaglt} we are in the situation of Subsection \ref{sec:absmain}. By Lemma \ref{average} and the result of \cite{KoVuWe} one has for $V\leq 0$ and $t>0$
\begin{align*}
&\tr f\left(l^2((\mathbf{D}-\mathbf{A})^2+V)\right) \leq t e^t \int_0^\infty \tr f\left(l^2(-\Delta +\mu V)\right) e^{-\mu t}\,dt \\
& \leq t e^t \left(L' \int_{|x|<l} \! V(x)_- \log\frac l{|x|} \,dx + L_q' \int_0^\infty \!\!\left(\int_{\Sph} V(r\omega)_-^q \,d\omega \right)^{1/q} \!r\,dr \right) \int_0^\infty \mu e^{-\mu t} \,d\mu \\
& \leq t^{-1} e^t \left(L' \int_{|x|<l} \! V(x)_- \log\frac l{|x|} \,dx + L_q' \int_0^\infty \!\!\left(\int_{\Sph} V(r\omega)_-^q \,d\omega \right)^{1/q} \!r\,dr \right) \,.
\end{align*}
We obtain the assertion by choosing $t=1$.
\end{proof}

\subsubsection{Lieb-Thirring inequalities for surface potentials}\label{ex:ltsurf}

Our next example concerns Schr\"o\-dinger operators in $\R^{d+1}$ with potentials supported on the hyperplane $\R^d\times\{0\}$. Let $\mathbf{A}\in L_{2,\loc}(\R^{d+1},\R^{d+1})$ and let the operator $H(\mathbf{A},v)$ be defined through the closure of the quadratic form
\begin{equation}\label{eq:ltsurfform}
  \iint_{\R^{d+1}} |(D-\mathbf{A}) u|^2 \,dx\,dy + \int_{\R^d} v(x)|u(x,0)|^2\,dx \,,
  \quad u\in C_0^\infty(\R^{d+1})\, .
\end{equation}
Schr\"odinger operators with interactions supported on lower dimensional manifolds have been studied extensively and we refer to \cite{BrExKuSe} for motivations and references. The fact that the number of negative eigenvalues of the operator $H(\mathbf{A},v)$ satisfies a Cwikel-Lieb-Rozenblum inequality was found by Rozenblum \cite{Ro}. In \cite{ltsurf} we proved Lieb-Thirring inequalities for this operator in the case $\mathbf{A}\equiv 0$. Here we will use Theorem \ref{diamag} to extend these inequalities to arbitrary $\mathbf{A}$. We emphasize that in this application of Theorem \ref{diamag} one has $X\neq Y$.

\begin{theorem}
  \label{ltsurf}
  Let $\gamma>0$ if $d=1$ and $\gamma\geq 0$ if $d\geq 2$. Then there exists a constant $S_{\gamma,d}$ such that for any $v\in L_{2\gamma+d}(\R^d)$ and all $\mathbf{A}\in  L_{2,\loc}(\R^{d+1},\R^{d+1})$ one has
  \begin{equation}\label{eq:main1}
    \tr\left[ H(\mathbf{A},v) \right]_-^\gamma
    \leq S_{\gamma,d} \int_{\R^d} v(x)_-^{2\gamma+d} \,dx \,.
  \end{equation}
\end{theorem}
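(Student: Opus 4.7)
The plan is to cast the problem into the abstract framework of Subsection \ref{sec:absmain} and then quote Theorem \ref{diamag}, with the non-magnetic version of the inequality (already established in \cite{ltsurf}) serving as the input hypothesis. Choose $X=\R^{d+1}$ and $Y=\R^d$, both with Lebesgue measure, and set $H:=-\Delta$ and $M:=(\mathbf{D}-\mathbf{A})^2$ in $L_2(\R^{d+1})$. Since $\mathbf{A}\in L_{2,\loc}$, the diamagnetic inequality \eqref{eq:diamagintro} guarantees the semigroup domination \eqref{eq:domination}. Because adding the positive part $v_+$ to the quadratic form only raises eigenvalues, we may assume $v\leq 0$ and replace $v$ by $-v_-$. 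Then define
$G\colon L_2(\R^{d+1})\to L_2(\R^d)$ on smooth compactly supported $u$ by $(Gu)(x):=\sqrt{v_-(x)}\,u(x,0)$,
so that $\|Gu\|^2_{L_2(\R^d)}=\int_{\R^d}v_-(x)|u(x,0)|^2\,dx$ and the form in \eqref{eq:ltsurfform} becomes exactly $m[u]-\|Gu\|^2$. Note that here $X\neq Y$ and $G$ need not be closable.

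Next I would verify all remaining assumptions of Subsection \ref{sec:absmain}. Form-boundedness of $\|Gu\|^2$ with respect to $-\Delta$ with relative form-bound zero follows from the Sobolev trace inequality combined with H\"older: for $v_-\in L_{2\gamma+d}(\R^d)$ one obtains $\int v_-|u(\cdot,0)|^2\,dx \leq \|v_-\|_{L_{2\gamma+d}}\|u(\cdot,0)\|_{L_{2(2\gamma+d)/(2\gamma+d-1)}}^2$ and the trace space then embeds into the latter $L_p$-space with any small fraction of the $H^1(\R^{d+1})$-norm (with a complementary lower-order term). The reality and Kato-type conditions \eqref{eq:greal} and \eqref{eq:gkato} for $G$ are evident: $G$ is the composition of the real, pointwise nonnegative multiplication by $\sqrt{v_-(x)}$ with the trace $u\mapsto u(\cdot,0)$, so $|Gu|=G|u|$ and $G(v\sgn u)=(Gv)\,\sgn(Gu)$ hold pointwise a.e.

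Then I would feed the main result of \cite{ltsurf} into \eqref{eq:diamagnegass}: in the non-magnetic case one has, for $\gamma$ in the claimed range and some constant $S'_{\gamma,d}$,
\begin{equation*}
\tr(-\Delta-\lambda G^*G)_-^\gamma \leq S'_{\gamma,d}\int_{\R^d}(\lambda v_-(x))^{2\gamma+d}\,dx = C\lambda^\alpha
\end{equation*}
with $\alpha:=2\gamma+d$ and $C:=S'_{\gamma,d}\int_{\R^d}v_-^{2\gamma+d}\,dx$, the scaling being a consequence of replacing $G$ by $\sqrt{\lambda}G$. Theorem \ref{diamag} then yields
\begin{equation*}
\tr(M-\lambda G^*G)_-^\gamma \leq C\left(\frac{e}{\alpha}\right)^\alpha \Gamma(\alpha+1)\,\lambda^\alpha,
\end{equation*}
and setting $\lambda=1$ gives \eqref{eq:main1} with $S_{\gamma,d}:=S'_{\gamma,d}\,(e/(2\gamma+d))^{2\gamma+d}\,\Gamma(2\gamma+d+1)$.

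The main obstacle I expect is the careful verification of the Kato-type condition \eqref{eq:gkato} and the relative form-bound zero hypothesis for this not-quite-standard operator $G$. Conceptually both are easy because $G$ factors through two operations (nonnegative multiplication and trace) that individually respect absolute values and signs, but one must be slightly careful that the trace can be defined on the form domain of $H$ (since $\dom G\subset\dom h$ is required), which is why the trace inequality must be applied with a relatively compact lower-order term. Once these abstract assumptions are checked, the result is an immediate corollary of Theorem \ref{diamag}.
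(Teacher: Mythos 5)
Your argument is correct and follows exactly the route of the paper: take $X=\R^{d+1}$, $Y=\R^d$, $H=-\Delta$, $M=(\mathbf{D}-\mathbf{A})^2$, define $G$ via the trace composed with multiplication by $\sqrt{v_-}$, verify the hypotheses of Subsection \ref{sec:absmain} (domination from the diamagnetic inequality, form-boundedness from the Sobolev trace theorem, and \eqref{eq:greal}, \eqref{eq:gkato} from the pointwise nature of $G$), and then apply Theorem \ref{diamag} with the non-magnetic input from \cite{ltsurf} and $\alpha=2\gamma+d$. The paper states these verifications more tersely, but the substance is identical.
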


\begin{proof}
 We take $X:=\R^{d+1}$ with Lebesgue measure. As explained in Example \ref{diamaglt} the diamagnetic inequality \eqref{eq:domination} holds for $H:=-\Delta$ and $M:=(\mathbf{D}-\mathbf{A})^2$. To define the perturbation let $0\geq v\in L_{2\gamma+d}(\R^d)$ and define $Y:=\R^d$ with Lebesgue measure and $(Gu)(y):=\sqrt{v(y)_-} u(y,0)$ with $\dom G:= H^1(\R^{d+1})$. Note that $G$ is well-defined by the Sobolev trace theorem and satisfies the assumptions in Subsection \ref{sec:absmain}. Hence the assertion follows from Theorem \ref{diamag} and the result for $\mathbf{A}\equiv 0$ in \cite{ltsurf}.
\end{proof}

\begin{remark}
 Similarly as in \cite{ltsurf}, Theorem \ref{ltsurf} implies a theorem about operators on the halfspace $\R^{d+1}_+ =\{(x,y):\ x\in\R^d,\, y>0 \}$. Indeed, let $\mathbf A$ be given on $\R^{d+1}_+$ and let $\tilde H(\mathbf A,v)$ in $L_2(\R^{d+1}_+)$ be defined through the quadratic form \eqref{eq:ltsurfform} for $u\in C_0^\infty(\overline{\R^{d+1}_+})$ with the first integral restricted to $\R^{d+1}_+$. Extending $\mathbf A$ to $\R^{d+1}$ by setting $A_j(x,y)=A_j(x,-y)$ for $j=1,\ldots,d$ and $A_{d+1}(x,y)=-A_{d+1}(x,-y)$, we see that the operator $H(\mathbf A, 2v)$ leaves the subspaces of even and odd functions with respect to $y$ invariant, and that its parts on even and odd functions are unitarily equivalent to $\tilde H(\mathbf A,v)$ and to the Dirichlet Laplacian on $\R^{d+1}_+$, respectively. Hence Lieb-Thirring inequalities for $\tilde H(\mathbf A,v)$ follow immediately from those for $H(\mathbf A, 2v)$.
\end{remark}

\subsubsection{Subtracting a critical local singularity}\label{ex:ltstab}

Let $d=3$ and $\mathbf{A}\in L_{2,\loc}(\R^3)$. We claim that the quadratic form
\begin{equation}\label{eq:formstab}
\left\| |\mathbf{D}-\mathbf{A}|^{1/2} u\right\|^2 -\frac2\pi \left\| |x|^{-1/2} u \right\|^2
\end{equation}
is non-negative for $u\in C_0^\infty(\R^3)$. Here $|\mathbf{D}-\mathbf{A}| := \sqrt{(\mathbf{D}-\mathbf{A})^2}$ is defined via the spectral theorem. Indeed, if $\mathbf{A}\equiv 0$ this is Kato's inequality (see, e.g., \cite{He} for a proof). For general $\mathbf{A}$ we combine the diamagnetic inequality \eqref{eq:diamagintro} and the subordination formula
\begin{equation*}\label{eq:subord}
e^{-\lambda} = \frac{1}{\sqrt\pi} \int_0^\infty e^{-s-\lambda^2/(4s)} \frac{ds}{\sqrt s}
\end{equation*}
to obtain
\begin{equation}
 \label{eq:diamagrel}
|\exp(-t \left|\mathbf{D}-\mathbf{A}\right| ) f| \leq \exp(-t \sqrt{-\Delta} ) |f|
\quad \text{a.e.}
\end{equation}
This implies $\| |\mathbf{D}-\mathbf{A}|^{1/2} u\|^2 \geq \| (-\Delta)^{1/4} u\|^2$ for all $u\in C_0^\infty(\R^3)$ and hence the non-negativity of \eqref{eq:formstab}.

Now let $\Omega\subset\R^3$ be an open set. The form \eqref{eq:formstab} restricted to 
$$\{ u \in\dom \sqrt{|\mathbf{D}-\mathbf{A}|-(2/\pi)|x|^{-1} } :\ u\equiv 0 \ \text{on}\ \Omega^c \}$$ is non-negative and closed in $L_2(\Omega)$ (since limits of functions that are zero on $\Omega^c$ are zero on $\Omega^c$), and hence generates a non-negative operator $T_\Omega(\mathbf{A})$ in $L_2(\Omega)$. For this operator one has

\begin{theorem}
 \label{stability}
Let $\Omega\subset\R^3$ be an open set of finite measure and $\mathbf{A}\in L_{2,\loc}(\R^3)$. Then
\begin{equation}\label{eq:stability}
    \tr\left( T_{\Omega}(\mathbf{A}) -\lambda \right)_-
\leq 2.0152\, \lambda^4 \,|\Omega| 
\qquad\text{for all}\ \lambda>0\,.
  \end{equation}
\end{theorem}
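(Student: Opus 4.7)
The plan is to apply Theorem~\ref{diamagdisc} on $X=\Omega$ with $H:=T_\Omega(0)$, $M:=T_\Omega(\mathbf{A})$, $G=I$, $\gamma=1$, and $\alpha=4$. Both operators are non-negative self-adjoint on $L_2(\Omega)$ by the argument already carried out in the excerpt for \eqref{eq:formstab}. With these parameters Theorem~\ref{diamagdisc} produces the excess factor
$$\left(\frac{1}{e}\right)\left(\frac{e}{4}\right)^{4}\frac{\Gamma(5)}{\Gamma(2)}=\frac{3e^{3}}{32},$$
which is numerically close to $1.883$. Hence the target constant $2.0152$ will correspond to a non-magnetic estimate of the form $\tr(T_\Omega(0)-\lambda)_{-}^{1}\leq C_{0}|\Omega|\lambda^{4}$ with $C_{0}=32\cdot 2.0152/(3e^{3})\approx 1.07$.

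The first task is to verify the semigroup domination \eqref{eq:domination} for the pair $(T_\Omega(0),T_\Omega(\mathbf{A}))$. The input is the relativistic diamagnetic inequality \eqref{eq:diamagrel}, which provides the domination for $\sqrt{-\Delta}$ and $|\mathbf{D}-\mathbf{A}|$ on $L_2(\R^{3})$. To incorporate the critical Coulomb term and the Dirichlet constraint $u\equiv 0$ on $\Omega^{c}$, I would check the Beurling-Deny-Ouhabaz conditions \eqref{eq:bd1}, \eqref{eq:bd2}, \eqref{eq:kato} at the form level. Since the Coulomb contribution $-(2/\pi)\||x|^{-1/2}u\|^{2}$ is common to both forms and invariant under $u\mapsto|u|$, these conditions reduce to the corresponding ones for $\sqrt{-\Delta}$ and $|\mathbf{D}-\mathbf{A}|$, which follow in standard fashion from \eqref{eq:diamagrel}. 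The Dirichlet constraint is a restriction of the form domain to functions supported in $\Omega$, which is compatible with the modulus and truncation operations used in \eqref{eq:bd2} and \eqref{eq:kato}.

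The second ingredient is the non-magnetic bound for $T_\Omega(0)$. By the variational principle the Dirichlet restriction to $\Omega$ yields
$$\tr(T_\Omega(0)-\lambda)_{-}^{1}\leq\tr\bigl(\sqrt{-\Delta}-(2/\pi)|x|^{-1}-\lambda\chi_{\Omega}\bigr)_{-}^{1},$$
and the right-hand side is controlled by the Hardy-Lieb-Thirring inequality for the relativistic operator at critical Coulomb coupling, applied with $V=\lambda\chi_{\Omega}$; such an inequality with an explicit constant is available in the cited literature on stability of relativistic matter. Combining this input with the transfer from Theorem~\ref{diamagdisc} closes the argument. I expect the main technical obstacle to be the semigroup domination step: because the Coulomb is only a form perturbation and $\exp(s(2/\pi)|x|^{-1})$ fails to be bounded on $L_{2}$, the usual Trotter product identity does not literally apply, and one must argue either through the form-level Ouhabaz conditions or through a Feynman-Kac representation based on the subordinated (Cauchy) process, in which the Coulomb enters as a positive multiplicative functional and the magnetic potential contributes only a unimodular phase.
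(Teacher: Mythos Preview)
Your strategy is correct and coincides with the paper's: apply Theorem~\ref{diamagdisc} with $H=T_\Omega(0)$, $M=T_\Omega(\mathbf A)$, $\gamma=1$, $\alpha=4$, so that the excess factor is indeed $3e^3/32\approx 1.883$. The two auxiliary ingredients are handled slightly differently in the paper. For the domination \eqref{eq:domination} the paper does not verify the Ouhabaz form conditions; instead it invokes Trotter to add a potential $V$ to both sides of \eqref{eq:diamagrel}, takes $V=n\chi_{\Omega^c}$ (together with the Coulomb term) and lets $n\to\infty$ in strong resolvent sense, referring to \cite{stability} for the details --- so your concern about Trotter with the unbounded Coulomb is legitimate but is absorbed into that citation rather than resolved in the text. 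For the non-magnetic bound the paper does not pass through the whole-space Hardy--Lieb--Thirring inequality; it quotes directly from Lieb--Yau \cite{LiYa} the estimate $\tr(T_\Omega(0)-\lambda)_-\le (3/4\pi)\cdot 4.4827\,|\Omega|\,\lambda^4$ (proved there for a ball, with the same argument valid for any open set of finite measure), and multiplying by $3e^3/32$ gives exactly $2.0152$. Your variational reduction to the whole-space operator is also correct, but you would then have to track the explicit constant from \cite{ltpseudo} to recover the stated numerical value.
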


This estimate is the key ingredient in the proof of stability of relativistic matter in magnetic fields in \cite{stability}. The constant $2/\pi$ multiplying the singularity $|x|^{-1}$ corresponds to the critical nuclear charge. The constant on the right side of \eqref{eq:stability} determines the allowed range $(0,\alpha_c)$ of the fine structure constant, and the value 2.0152 leads to $\alpha_c=1/133$ which is larger than the physical value $1/137\,$ !

The analog of \eqref{eq:stability} for relativistic Schr\"odinger operators $|\mathbf{D}-\mathbf{A}| -\frac2\pi |x|^{-1} +V$ was proved in \cite{ltpseudo}, extending previous work of \cite{EkFr}. Remarkably, these inequalities lead to semi-classical bounds even though the classical phase-space integral diverges due to the singularity $|x|^{-1}$. The inequalities have extensions to arbitrary dimensions and to arbitrary fractional powers of the Laplacian, see \cite{ltpseudo,hltsimple}.

\begin{proof}
 We apply Theorem \ref{diamagdisc} with $X=\Omega$, $H=T_\Omega(0)$ and $M=T_\Omega(\mathbf{A})$. In order to prove the diamagnetic inequality \eqref{eq:domination} we note that by Trotter's product formula \eqref{eq:diamagrel} remains valid if $|\mathbf{D}-\mathbf{A}|$ and $\sqrt{-\Delta}$ are replaced by $|\mathbf{D}-\mathbf{A}|+V$ and $\sqrt{-\Delta}+V$. Choosing $V=n\chi_{\Omega^c}$ and letting $n\to\infty$ the operators converge to $T_\Omega(\mathbf{A})$ and $T_\Omega(0)$ in strong resolvent sense, which yields \eqref{eq:domination}; see \cite{stability} for details. Inequality \eqref{eq:diamagass} for $\mathbf{A}\equiv 0$, $\gamma=1$ and $\alpha=4$ was shown in \cite{LiYa} with $C=(3/4\pi)\times4.4827 \, |\Omega|$. (In \cite{LiYa} it is assumed that $\Omega$ is a ball, but the same proof applies to any open set of finite measure.) Theorem \ref{diamagdisc} yields the assertion with constant $C'=6(e/4)^3 C$.
\end{proof}


\section{Semi-classical spectral estimates for magnetic Laplacians}\label{sec:blymag}

In this section we assume that $\Omega\subset\R^d$, $d\geq 2$, is an open set of finite measure and that $\mathbf{A}\in L_{2,\loc}(\Omega)$. We denote by $H_\Omega(\mathbf{A})$ the self-adjoint operator in $L_2(\Omega)$ corresponding to the closure of the quadratic form $\int_\Omega |(\mathbf{D}-\mathbf{A})u|^2 \,dx$
defined for $u\in C_0^\infty(\Omega)$. We are interested in estimates of the form
\begin{equation}\label{eq:blymag}
\tr\left(H_\Omega(\mathbf{A})-\lambda\right)_-^\gamma 
  \leq \rho_{\gamma,d} L_{\gamma,d}^{\cl} \lambda^{\gamma+d/2} |\Omega| \,,
  \qquad \lambda\geq 0 \,,
\end{equation}
with the semi-classical constant
\begin{equation}\label{eq:constsc}
L_{\gamma,d}^{\cl}
=\frac{\Gamma(\gamma+1)}{2^d\pi^{d/2}\Gamma(\gamma+\frac{d}{2}+1)}\,.
\end{equation}
Our goal will be to find optimal or close to optimal values for $\rho_{\gamma,d}$ and we begin by recalling some known facts concerning this problem.

\begin{enumerate}
\item Estimate \eqref{eq:blymag} holds for any $\gamma\geq 0$ with some finite constant $\rho_{\gamma,d}$ depending only on $d$ and $\gamma$. For $\mathbf{A}\equiv 0$ this was independently shown by Lieb \cite{Li2}, M\'etivier \cite{Me} and Rozenblum \cite{Ro1}. Lieb's proof works also for non-trivial $\mathbf{A}$.
 \item The constant $\rho_{\gamma,d}$ in \eqref{eq:blymag} cannot be less than one. This is a consequence of the asymptotics $\lambda^{-\gamma-d/2} \tr\left(H_\Omega(\mathbf{A})-\lambda\right)_-^\gamma \to L_{\gamma,d}^{\cl} |\Omega|$ as $\lambda\to\infty$. (Since $\lambda\to\infty$ is equivalent to $\hbar\to0$ for $\mathbf{A}\equiv 0$, this explains why $L_{\gamma,d}^\cl$ is called the \emph{semi-classical constant}.) We refer to the appendix for references and a short proof under our minimal assumptions on $\mathbf{A}$ and $\Omega$.
\item By an argument of Aizenman and Lieb \cite{AiLi} one can show that (the smallest possible) $\rho_{\gamma,d}$ is a non-increasing function of $\gamma$.
\item A celebrated result of Laptev and Weidl \cite{LaWe1} (see also \cite{BeLo}) implies that \eqref{eq:blymag} holds with $\rho_{\gamma,d}=1$ if $\gamma\geq 3/2$. (More precisely, they considered Schr\"odinger operators $(\mathbf{D}-\mathbf{A})^2+V$ in the whole space, but this implies \eqref{eq:blymag} by the variational principle; see the proof of Theorem \ref{blymagnonsharp} for a technical subtlety in this argument.)
\item In the case $\mathbf{A}\equiv 0$, Berezin \cite{Be1} and Li and Yau \cite{LY} have independently shown that \eqref{eq:blymag} holds with $\rho_{\gamma,d}=1$ if $\gamma\geq 1$.
\item In the case $\mathbf{A}\equiv 0$ and $\Omega$ tiling (that is, $\R^d$ can be decomposed, up to a set of measure zero, into a disjoint union of translated and rotated copies of $\Omega$), P\'olya \cite{Po} has proved \eqref{eq:blymag} with $\rho_{\gamma,d}=1$ for all $\gamma\geq 0$. That this is true without the tiling assumption is an open conjecture.
\end{enumerate}

We emphasize that the analogs of the Berezin-Li-Yau result and the P\'olya result for arbitrary magnetic fields are not known. In this section we shall review some recent progress concerning the constants $\rho_{\gamma,d}$ in \eqref{eq:blymag}. In particular, it was shown in \cite{ErLoVo} and \cite{FrLoWe} that, if $d=2$ and $\mathbf{A}$ is restricted to generate a homogeneous magnetic field, \eqref{eq:blymag} holds with $\rho_{\gamma,2}=1$ for $\gamma\geq 1$, but one needs $\rho_{\gamma,2}>1$ for $0\leq\gamma<1$. This means that P\'olya's conjecture is not true in the magnetic case; see Subsection \ref{sec:hom}.


\subsection{Arbitrary magnetic fields}

To begin our investigation of \eqref{eq:blymag} we show that an idea similar to that in Theorem \ref{diamagdisc} allows one to derive explicit values for the constants for $0\leq\gamma<3/2$ from those for $\gamma=3/2$.

\begin{theorem}\label{blymagnonsharp}
Let $\Omega\subset\R^d$, $d\geq 2$, be an open set of finite measure and $\mathbf{A}\in L_{2,\loc}(\Omega)$. Then for $0\leq \gamma<3/2$ one has
\begin{equation}\label{eq:blymagnonsharpd}
  \tr\left(H_\Omega(\mathbf{A})-\lambda\right)_-^\gamma 
  \leq \rho_{\gamma,d} L_{\gamma,d}^{\cl} \lambda^{\gamma+d/2} |\Omega|
\qquad\text{for all}\ \lambda>0
\end{equation}
with
\begin{equation*}
  \rho_{\gamma,d} =
  \frac{\Gamma(5/2)\, \Gamma(\gamma+d/2+1)}{\Gamma((5+d)/2)\, \Gamma(\gamma+1)}
  3^{-3/2} (3+d)^{(3+d)/2} (2\gamma)^\gamma (2\gamma+d)^{-\gamma-d/2} \,.
\end{equation*}
\end{theorem}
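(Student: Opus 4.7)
The plan is to deduce the case $0\le\gamma<3/2$ from the sharp magnetic Laptev-Weidl inequality at $\gamma=3/2$ (item (4) above, where $\rho_{3/2,d}=1$) via a pointwise comparison of the two power functions $(\lambda-s)_+^{\gamma}$ and $(\lambda'-s)_+^{3/2}$, followed by optimization in the auxiliary parameter $\lambda'>\lambda$. This is in the same spirit as the proof of Theorem~\ref{diamagdisc}: there a $\gamma$-moment is controlled by the semigroup of $H$, whereas here it will be controlled by the $3/2$-moment of $H_\Omega(\mathbf{A})$ itself, for which a sharp bound is available.

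First, I would extract from the Laptev-Weidl theorem the inequality $\tr(H_\Omega(\mathbf{A})-\mu)_-^{3/2}\le L^{\cl}_{3/2,d}\,\mu^{(3+d)/2}\,|\Omega|$ for all $\mu>0$. Extending $\mathbf{A}$ to $\R^d$ (e.g.\ by zero off $\Omega$) and applying the whole-space Laptev-Weidl inequality to $(\mathbf{D}-\mathbf{A})^2+V$ with $V=-\mu\chi_\Omega$ produces a right-hand side equal to $L^{\cl}_{3/2,d}\,\mu^{(3+d)/2}\,|\Omega|$, while a routine form-comparison (functions in the form domain of $H_\Omega(\mathbf{A})$, extended by zero, are admissible trial functions for the whole-space form) shows that the left-hand side dominates $\tr(H_\Omega(\mathbf{A})-\mu)_-^{3/2}$. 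This is the technical subtlety referred to in item (4). Second, I would prove the pointwise inequality
\begin{equation*}
(\lambda-s)_+^{\gamma}\,\le\,\kappa(\lambda,\lambda')\,(\lambda'-s)_+^{3/2}\quad(s\ge 0),
\qquad
\kappa(\lambda,\lambda')\,:=\,\frac{\gamma^{\gamma}\,(\tfrac32-\gamma)^{3/2-\gamma}}{(\tfrac32)^{3/2}}\,(\lambda'-\lambda)^{\gamma-3/2},
\end{equation*}
valid whenever $0<\lambda<\lambda'<\tfrac{3}{2\gamma}\lambda$ (the case $\gamma=0$ is simpler, using $\chi_{[0,\lambda)}(s)\le(\lambda'-\lambda)^{-3/2}(\lambda'-s)_+^{3/2}$). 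This is obtained by maximizing the ratio $(\lambda-s)_+^{\gamma}/(\lambda'-s)_+^{3/2}$ over $s\in[0,\lambda)$: the unique interior critical point is $s^*=(\tfrac32\lambda-\gamma\lambda')/(\tfrac32-\gamma)$, and substituting yields the sharp constant $\kappa(\lambda,\lambda')$. Applying this pointwise bound to every non-negative eigenvalue $\lambda_j$ of $H_\Omega(\mathbf{A})$ and summing (so that eigenvalues $\lambda_j\in[\lambda,\lambda')$ contribute only non-negatively on the right) gives
\begin{equation*}
\tr(H_\Omega(\mathbf{A})-\lambda)_-^{\gamma}\,\le\,\kappa(\lambda,\lambda')\,\tr(H_\Omega(\mathbf{A})-\lambda')_-^{3/2}\,\le\,\kappa(\lambda,\lambda')\,L^{\cl}_{3/2,d}\,(\lambda')^{(3+d)/2}\,|\Omega|.
\end{equation*}

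Third, I would optimize in $\lambda'$. Setting $\lambda'=\lambda(1+\epsilon)$ extracts the expected $\lambda^{\gamma+d/2}$ and reduces the problem to minimizing $\epsilon^{\gamma-3/2}(1+\epsilon)^{(3+d)/2}$ over $\epsilon>0$. The minimum is attained at $\epsilon^*=(3-2\gamma)/(d+2\gamma)$, giving $1+\epsilon^*=(d+3)/(d+2\gamma)$; one checks $1+\epsilon^*<3/(2\gamma)$, so Step~2 applies. The main (and essentially only) remaining obstacle is the algebraic bookkeeping: the identity $(\tfrac32-\gamma)^{3/2-\gamma}(3-2\gamma)^{\gamma-3/2}=(\tfrac12)^{3/2-\gamma}$ collapses the $\gamma$-dependent prefactor of $\kappa$ at $\epsilon=\epsilon^*$ into $(2\gamma)^{\gamma}/3^{3/2}$; combining this with the ratio $L^{\cl}_{3/2,d}/L^{\cl}_{\gamma,d}=\Gamma(5/2)\Gamma(\gamma+d/2+1)/[\Gamma((5+d)/2)\Gamma(\gamma+1)]$ and gathering the factors involving $(d+2\gamma)$ and $(d+3)$ produces exactly the stated value of $\rho_{\gamma,d}$.
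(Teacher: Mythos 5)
Your proof is correct and is essentially the paper's: it reduces the case $0\le\gamma<3/2$ to the magnetic Laptev--Weidl bound at $\sigma=3/2$ via the pointwise comparison $(\lambda-s)_+^\gamma\le (3/2)^{-3/2}\gamma^\gamma(3/2-\gamma)^{3/2-\gamma}(\lambda'-\lambda)^{\gamma-3/2}(\lambda'-s)_+^{3/2}$ followed by the optimization $\lambda'=\lambda(d+3)/(d+2\gamma)$, which is precisely the content of the paper's Lemma~\ref{goingdown}, and your algebra reproduces the stated $\rho_{\gamma,d}$. The one step you gloss over is that extending $\mathbf{A}$ by zero requires $\mathbf{A}\in L_{2,\loc}(\overline{\Omega})$; for $\mathbf{A}$ merely in $L_{2,\loc}(\Omega)$ the paper adds an approximation argument (strong resolvent convergence of $H_\Omega(\mathbf{A}_n)$ together with Fatou's lemma for trace ideals).
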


This has appeared in \cite{FrLoWe}. We do not claim that the values of the constants $\rho_{\gamma,d}$ are best possible. We note, however, that for $d=2$ one has
\begin{equation}
\label{eq:blymagnonsharpconst2}
\rho_{\gamma,2} = (5/3)^{3/2} (\gamma/(\gamma+1))^\gamma
\end{equation}
and, in particular, $\rho_{1,2}= (5/3)^{3/2}/2\approx 1.076$ and $\rho_{0,2}=(5/3)^{3/2}\approx 2.152$. It will follow from Theorem \ref{blyhom} below that for any $0\leq\gamma\leq 1$ the constant $\rho_{\gamma,2}$ is off at most by a factor of $(5/3)^{3/2}/2\approx 1.0758\,.$

Our proof is based on the following abstract lemma (see \cite{FrLoWe}) which allows one to go from larger values of $\gamma$ to smaller ones. It is somewhat similar in spirit to the estimate $\tr(H-\lambda)_-^\gamma\leq \left(\frac \gamma e\right)^{\gamma}  t^{-\gamma} e^{-t\lambda} \exp(-t H)$ used in the proof of Theorem \ref{diamagdisc}.

\begin{lemma}\label{goingdown}
 Let $H$ be a non-negative self-adjoint operator with discrete spectrum and assume that for some $\sigma>0$, $\kappa\geq 0$ and $C>0$ one has
\begin{equation}\label{eq:discest0}
\tr(H-\lambda)_-^\sigma \leq C \lambda^{\sigma+\kappa}
\qquad\text{for all}\ \lambda>0 \,.
\end{equation}
Then for any $0\leq\gamma<\sigma$ one has
\begin{equation}\label{eq:discest}
\tr(H-\lambda)_-^\gamma \leq C \ \frac{b(\gamma,\sigma)}{b(\gamma+\kappa,\sigma+\kappa)} \ \lambda^{\gamma+\kappa}
\qquad\text{for all}\ \lambda>0 \,,
\end{equation}
where $b(0,\sigma):= 1$ if $\sigma>\gamma=0$ and $b(\gamma,\sigma):= \sigma^{-\sigma}\gamma^\gamma (\sigma-\gamma)^{\sigma-\gamma}$ if $\sigma>\gamma >0$.
\end{lemma}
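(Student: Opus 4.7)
The plan is to derive the lemma from an elementary one-variable inequality
\[
  s^\gamma \;\le\; b(\gamma,\sigma)\, t^{\gamma-\sigma}\, (s+t)^\sigma \qquad (s\ge 0,\; t>0),
\]
applied pointwise to every eigenvalue gap $s=(\lambda-\lambda_j(H))_+$ and then summed. This parallels the proof of Theorem \ref{diamagdisc}, where the exponential bound $\lambda_-^\gamma \le (\gamma/e)^\gamma t^{-\gamma} e^{-t\lambda}$ played the same role; here the ``rescaled'' power $(s+t)^\sigma$ takes the place of the exponential.

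For $\gamma=0$ the inequality reduces to the trivial $(s+t)^\sigma \ge t^\sigma$; for $\gamma>0$ it follows by maximizing $s\mapsto s^\gamma (s+t)^{-\sigma}$ over $s\ge 0$, whose unique critical point $s_\ast = \gamma t/(\sigma-\gamma)$ (giving $s_\ast + t = \sigma t/(\sigma-\gamma)$) yields the maximum value $\gamma^\gamma \sigma^{-\sigma} (\sigma-\gamma)^{\sigma-\gamma} t^{\gamma-\sigma} = b(\gamma,\sigma)\, t^{\gamma-\sigma}$. Applying this with $s=(\lambda-\lambda_j(H))_+$ for each eigenvalue $\lambda_j(H)$ of $H$, and noting that $s+t = (\lambda+t)-\lambda_j(H)$ whenever $s>0$ (while both sides vanish when $\lambda_j(H)\ge\lambda$), I sum over $j$ and invoke the hypothesis \eqref{eq:discest0} at level $\lambda+t$ to obtain
\[
  \tr(H-\lambda)_-^\gamma \;\le\; b(\gamma,\sigma)\, t^{\gamma-\sigma}\, \tr\bigl(H-(\lambda+t)\bigr)_-^\sigma \;\le\; C\, b(\gamma,\sigma)\, t^{\gamma-\sigma}(\lambda+t)^{\sigma+\kappa},
\]
valid for every $t>0$.

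The final step is to optimize in $t$. A log-derivative computation gives the unique minimizer $t_\ast=\lambda(\sigma-\gamma)/(\gamma+\kappa)$, with $\lambda+t_\ast=\lambda(\sigma+\kappa)/(\gamma+\kappa)$. Substituting back and noting that the $(\sigma-\gamma)^{\sigma-\gamma}$ factor hidden in $b(\gamma,\sigma)$ cancels the matching factor emerging from $t_\ast^{\gamma-\sigma}$, the right-hand side simplifies to
\[
  C\,\sigma^{-\sigma}\gamma^\gamma (\sigma+\kappa)^{\sigma+\kappa}(\gamma+\kappa)^{-\gamma-\kappa}\lambda^{\gamma+\kappa} \;=\; C\,\frac{b(\gamma,\sigma)}{b(\gamma+\kappa,\sigma+\kappa)}\,\lambda^{\gamma+\kappa},
\]
which is \eqref{eq:discest}. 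The degenerate case $\gamma+\kappa=0$ is handled by sending $t\to\infty$, under which $t^{\gamma-\sigma}(\lambda+t)^{\sigma+\kappa}\to 1$. I anticipate no real obstacle in this argument: essentially everything beyond the one-variable inequality of the first step is routine calculus and bookkeeping, the only mildly delicate point being the algebraic identification of the optimized constant with $b(\gamma,\sigma)/b(\gamma+\kappa,\sigma+\kappa)$, which relies on the cancellation observed above.
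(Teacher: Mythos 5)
Your argument is correct and is essentially identical to the paper's proof: your one-variable inequality $s^\gamma \le b(\gamma,\sigma)\,t^{\gamma-\sigma}(s+t)^\sigma$ is exactly the paper's pointwise bound $(E-\lambda)_-^\gamma \le b(\gamma,\sigma)(\mu-\lambda)^{\gamma-\sigma}(E-\mu)_-^\sigma$ under the substitution $s=(\lambda-E)_+$, $\mu=\lambda+t$, and your optimizer $t_*=\lambda(\sigma-\gamma)/(\gamma+\kappa)$ coincides with the paper's choice $\mu=\lambda(\sigma+\kappa)/(\gamma+\kappa)$. The only (harmless) imprecision is the remark that ``both sides vanish when $\lambda_j\ge\lambda$'' --- the right-hand side need not vanish there, but this only strengthens the inequality upon summation.
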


\begin{proof}[Proof of Lemma \ref{goingdown}]
We first note that for $\sigma>\gamma\geq 0$, $\mu>\lambda$ and $E\geq 0$ one has
\begin{equation}\label{eq:gd}
(E-\lambda)_-^\gamma \leq b(\gamma,\sigma) (\mu-\lambda)^{-\sigma+\gamma} (E-\mu)_-^\sigma \,.
\end{equation}
with $b(\gamma,\sigma)$ as given in the lemma. Indeed, this follows by maximizing $(\mu-E)^{-\sigma} (\lambda-E)^\gamma$ explicitly over $E\in (0,\lambda)$. Combining \eqref{eq:gd} and \eqref{eq:discest0} we infer that for any $\mu>\lambda$
$$
\tr(H-\lambda)_-^\gamma \leq b(\gamma,\sigma) (\mu-\lambda)^{-\sigma+\gamma} \tr(H-\mu)_-^\sigma
\leq C b(\gamma,\sigma) (\mu-\lambda)^{-\sigma+\gamma} \mu^{\sigma+\kappa} \,.
$$
Optimizing the right side by choosing $\mu=\lambda (\sigma+\kappa)/(\gamma+\kappa)$ we obtain the assertion.
\end{proof}

\begin{remark}
 A slight variation of this argument shows that \eqref{eq:discest0} with $\kappa<0$ implies that $H=0$.
\end{remark}

\begin{proof}[Proof of Theorem \ref{blymagnonsharp}]
First, assume that $\mathbf{A}\in L_{2,\loc}(\overline{\Omega})$. Then the extension of $\mathbf{A}$ by $0$ belongs to 
$L_{2,\loc}(\R^d)$ and the Laptev-Weidl result \cite{LaWe1} together with the variational principle yield \eqref{eq:discest0} with $H=H_\Omega(\mathbf{A})$, $\sigma=3/2$, $\kappa=d/2$ and $C= L_{\sigma,d}^\cl |\Omega|$. The assertion in this case follows from Lemma~\ref{goingdown} and the explicit expression \eqref{eq:constsc} of the semi-classical constants.

Now assume only that $\mathbf{A}\in L_{2,\loc}(\Omega)$ and choose $\mathbf{A}_n\in L_{2,\loc}(\overline{\Omega})$ such that $\mathbf{A}_n\to \mathbf{A}$ in $L_{2,\loc}(\Omega)$. Following closely the arguments in \cite{Ka} or \cite{Si79} (the analog of the proof of \cite[Thm. 4.1]{Si79} is even simpler since we are considering the minimal operators) one shows that $H_\Omega(\mathbf{A}_n)\to H_\Omega(\mathbf{A})$ in strong resolvent sense. Hence for any $\gamma\geq 0$ and any $\lambda>0$, $\left(H_\Omega(\mathbf{A}_n)-\lambda\right)_-^\gamma \to \left(H_\Omega(\mathbf{A})-\lambda\right)_-^\gamma$ strongly \cite[Thm. VIII.20]{ReSi1} and by Fatou's lemma for trace ideals \cite[Thm. 2.7]{Si3}
$\liminf_{n\to\infty}\tr\left(H_\Omega(\mathbf{A}_n)-\lambda\right)_-^\gamma \geq \tr\left(H_\Omega(\mathbf{A})-\lambda\right)_-^\gamma$. Thus the inequality for $H_\Omega(\mathbf{A})$ follows from that for $H_\Omega(\mathbf{A}_n)$.
\end{proof}


\subsection{Sharp local trace estimates}\label{sec:diamagloc}

In Subsections \ref{sec:hom} and \ref{sec:ab} below we present improved versions of inequalities \eqref{eq:blymagnonsharpd} for some particular choices of $\mathbf{A}$. These improvements rely on certain generalized diamagnetic inequalities which will be the subject of this subsection. To motivate these inequalities we first note that the standard diamagnetic inequality implies that for $\phi(\lambda)=\exp(-t\lambda)$, $t>0$, one has
\begin{equation}\label{eq:diamagloc}
\tr\left(\chi_\Omega \ \phi\left((\mathbf{D}-\mathbf{A})^2\right) \chi_\Omega\right) 
\leq \tr\left(\chi_\Omega \ \phi(-\Delta) \chi_\Omega\right)
\end{equation}
for any open $\Omega\subset\R^d$ of finite measure. (This follows by applying \cite[Thm. 2.13]{Si3} to the Hilbert-Schmidt operator $\phi\left((\mathbf{D}-\mathbf{A})^2\right)^{1/2} \chi_\Omega$.) By linearity, \eqref{eq:diamagloc} holds for any function $\phi$ of the form $\phi(\lambda)= \int_0^\infty e^{-t\lambda} d\mu(t)$ with $d\mu$ a non-negative measure. It is a natural question whether a similar inequality is valid for a more general class of non-negative functions $\phi$. That this is, indeed, the case for homogeneous magnetic fields is a beautiful observation of Erd\H os, Loss and Vougalter \cite{ErLoVo}.

\begin{proposition}\label{diamaglochom}
Let $d=2$ and $\mathbf{A}(x)=\frac B2(-x_2,x_1)^T$ for some $B>0$. Then \eqref{eq:diamagloc} holds for any non-negative convex function $\phi$ on $[0,\infty)$ with $\int_0^\infty \phi(\lambda)\,d\lambda<\infty$ and any open set $\Omega\subset\R^2$ of finite measure.
\end{proposition}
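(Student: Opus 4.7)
The plan is to reduce this trace inequality, via the integral-kernel representation, to a one-dimensional inequality between a midpoint Riemann sum and an integral, and then to invoke convexity of $\phi$.

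First, I would observe that under the integrability hypothesis $\int_0^\infty \phi(\lambda)\,d\lambda < \infty$, both operators $\chi_\Omega \, \phi((\mathbf{D}-\mathbf{A})^2) \chi_\Omega$ and $\chi_\Omega \, \phi(-\Delta) \chi_\Omega$ are trace class with continuous integral kernels, and their traces can be computed as $\int_\Omega K(x,x)\,dx$, where $K$ is the kernel of the corresponding functional-calculus operator. It then suffices to prove a pointwise (in $x$) inequality between the diagonals of the two kernels.

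Next, I would compute each diagonal explicitly. For $-\Delta$ on $\R^2$, Fourier analysis produces the constant diagonal
$$K^{-\Delta}_\phi(x,x) = \int_{\R^2} \phi(|\xi|^2)\,\frac{d\xi}{(2\pi)^2} = \frac{1}{4\pi}\int_0^\infty \phi(s)\,ds.$$
For the Landau Hamiltonian with homogeneous field $B>0$, the spectrum consists of the eigenvalues $B(2n+1)$, $n=0,1,2,\ldots$, each of infinite multiplicity, and the $n$-th spectral projection $P_n$ has the translation-invariant diagonal $P_n(x,x) = B/(2\pi)$ (a standard fact reflecting the flux-per-Landau-level degeneracy, deducible from magnetic translations or from an explicit basis of Hermite-type eigenfunctions in the symmetric gauge). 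Consequently,
$$K^{\mathrm{mag}}_\phi(x,x) = \frac{B}{2\pi}\sum_{n=0}^\infty \phi(B(2n+1)).$$
Both diagonals are constants in $x$, so after integration over $\Omega$ the proposition reduces to the scalar inequality
$$2B \sum_{n=0}^\infty \phi(B(2n+1)) \le \int_0^\infty \phi(s)\,ds.$$

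Finally, this last inequality is precisely the statement that, for a non-negative convex function, the midpoint Riemann sum underestimates the integral. Indeed, Jensen's inequality applied on each interval $[2Bn, 2B(n+1)]$ of length $2B$ with midpoint $B(2n+1)$ yields
$$2B \, \phi(B(2n+1)) \le \int_{2Bn}^{2B(n+1)} \phi(s)\,ds,$$
and summation over $n\ge 0$ completes the argument. The main obstacle is not analytical but bookkeeping: justifying the trace class property and the kernel-diagonal representation under the mild hypothesis $\int_0^\infty\phi<\infty$, and citing the identification $P_n(x,x)=B/(2\pi)$. Once these ingredients are in place, the whole proposition collapses to a one-line convexity estimate.
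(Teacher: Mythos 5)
Your proposal is correct and follows essentially the same route as the paper: decompose $\phi((\mathbf{D}-\mathbf{A})^2)$ over the Landau levels, use $P_k(x,x)=B/2\pi$ and the Fourier computation for $-\Delta$, and finish with the midpoint/Jensen inequality for convex functions. The only cosmetic difference is that the paper evaluates $\|P_k\chi_\Omega\|_2^2$ directly as a Hilbert--Schmidt norm rather than passing through the diagonal of the full kernel, which amounts to the same bookkeeping.
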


A similar statement holds in arbitrary dimension, but below we shall use it only for $d=2$. We include the short proof for the sake of completeness.

\begin{proof}
One has $\phi( (\mathbf{D}-\mathbf{A})^2 ) = \sum_{k=1}^\infty \phi((2k-1) B) P_k$ where $P_k$ is the projection onto the $k$-th Landau level, and therefore
$$
\tr\left(\chi_\Omega \,\phi\left((\mathbf{D}-\mathbf{A})^2\right) \chi_\Omega\right)
= \sum_{k=1}^\infty \phi((2k-1) B) \| P_k\chi_\Omega \|_2^2 \,.
$$
Recalling that $P_k(y,y)=B/2\pi$ for all $y\in\R^2$ one finds
$$
\| P_k\chi_\Omega \|_2^2 = \iint_{\R^2\times\R^2} |P_k(x,y)|^2 \chi_\Omega(y) \,dx\,dy
= \int_{\R^2} P_k(y,y) \chi_\Omega(y) \,dy = \frac B{2\pi} |\Omega| \,,
$$
and hence
$$
\tr\left(\chi_\Omega \,\phi\left((\mathbf{D}-\mathbf{A})^2\right) \chi_\Omega\right)
= \frac B{2\pi} |\Omega| \sum_{k=1}^\infty \phi((2k-1) B) \,.
$$
One the other hand, using the Fourier transform one easily finds that
$$
\tr\left(\chi_\Omega \,\phi\left(-\Delta\right) \chi_\Omega\right)
= \frac 1{(2\pi)^2} |\Omega| \int_{\R^2} \phi(|\xi|^2) \,d\xi 
= \frac 1{4\pi} |\Omega| \int_0^\infty \phi(\lambda) \,d\lambda \,.
$$
The assertion now follows from the mean value property of convex functions, i.e.,
$$
\phi((2k-1)B) \leq \frac{1}{2B} \int_{2(k-1)B}^{2kB} \phi(\lambda) \,d\lambda
$$
for all $k$.
\end{proof}

We note, in particular, that the function $\phi(\mu)=(\mu-\lambda)_-$ for fixed $\lambda>0$ satisfies the assumptions of Lemma \ref{diamaglochom}. As we will explain in the following subsection, Erd\H os, Loss and Vougalter \cite{ErLoVo} used this function in order to obtain sharp bounds on eigenvalue sums for Laplacians with a homogeneous magnetic field in a domain. They also point out that for this choice of $\phi$, \eqref{eq:diamagloc} might \emph{fail} if the magnetic field is not homogeneous. Their counterexample is perturbative. (The assumption of radial symmetry of $\mathbf{A}$ on p. 905 of \cite{ErLoVo} should be dropped \cite{Er}.) Here is a non-perturbative result from \cite{FrHa} involving the Aharonov-Bohm operator.

\begin{proposition}\label{diamaglocab}
Let $d=2$, $\mathbf{A}(x)=\alpha |x|^{-2} (-x_2,x_1)^T$ for some $\alpha\in\R\setminus\Z$ and $\gamma>-1$. Then for any open $\Omega\subset\R^d$ of finite measure and any $\lambda>0$ one has
\begin{equation}
 \label{eq:diamaglocab}
\tr\left(\chi_\Omega \, \left((\mathbf{D}-\mathbf{A})^2-\lambda\right)_-^\gamma \chi_\Omega\right) 
\leq R_\gamma(\alpha)\ \tr\left(\chi_\Omega \, \left(-\Delta-\lambda\right)_-^\gamma \chi_\Omega\right) \,,
\end{equation}
where the constant
\begin{equation}
 \label{eq:diamaglocabconst}
R_\gamma(\alpha) :=
(\gamma+1) \sup_{s\geq 0} \sum_{n\in\Z} \int_0^1 (1-\mu)^\gamma \, J^2_{|n-\alpha|}(\sqrt\mu s)\,d\mu 
\end{equation}
is sharp and satisfies $R_\gamma(\alpha)>1$. Here $J_{|n-\alpha|}$ are Bessel functions.
\end{proposition}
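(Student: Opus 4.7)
The plan is to use the explicit diagonalization of the Aharonov--Bohm Laplacian in polar coordinates. In coordinates $(r,\theta)$, $(\mathbf{D}-\mathbf{A})^2$ decomposes as a direct sum over angular momenta $n\in\Z$, and with the Friedrichs extension (which is canonically determined since $\alpha\notin\Z$ forces $|n-\alpha|>0$ for every $n$) the generalized eigenfunctions are $\phi_{n,k}(r,\theta)=(2\pi)^{-1/2}J_{|n-\alpha|}(kr)e^{in\theta}$ with eigenvalue $k^2$, $k>0$. Applying the functional calculus to $(\cdot-\lambda)_-^\gamma$ and writing the integral kernel on the diagonal in this spectral representation, then substituting $k^2=\lambda\mu$, gives
\begin{equation*}
K_\alpha(x,x) \;=\; \frac{\lambda^{\gamma+1}}{4\pi}\sum_{n\in\Z}\int_0^1 (1-\mu)^\gamma\, J_{|n-\alpha|}^2\!\bigl(\sqrt{\lambda\mu}\,|x|\bigr)\,d\mu.
\end{equation*}

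For the plain Laplacian ($\alpha=0$) the classical identity $\sum_{n\in\Z}J_n^2(z)=1$ collapses the inner sum, so $K_0(x,x)$ is constant equal to $\lambda^{\gamma+1}/(4\pi(\gamma+1))$ and hence $\tr(\chi_\Omega(-\Delta-\lambda)_-^\gamma\chi_\Omega)=\lambda^{\gamma+1}|\Omega|/(4\pi(\gamma+1))$, which is reassuringly the semi-classical constant. For the Aharonov--Bohm kernel, after the change of variables $s=\sqrt{\lambda}\,|x|$, the pointwise estimate
\begin{equation*}
(\gamma+1)\sum_{n\in\Z}\int_0^1 (1-\mu)^\gamma\, J_{|n-\alpha|}^2(\sqrt{\mu}\,s)\,d\mu \;\le\; R_\gamma(\alpha), \qquad s\ge 0,
\end{equation*}
which is the very definition of $R_\gamma(\alpha)$, yields \eqref{eq:diamaglocab} upon integration over $\Omega$.

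Sharpness I would obtain by testing on a thin annulus of radius $s_0/\sqrt{\lambda}$, where $s_0$ approximates the supremum in \eqref{eq:diamaglocabconst}; on such an annulus $K_\alpha(x,x)$ is nearly constant and realizes the ratio $R_\gamma(\alpha)$, so no smaller constant can work. To prove the strict inequality $R_\gamma(\alpha)>1$ I would note that the continuous function $F(s):=(\gamma+1)\sum_n\int_0^1(1-\mu)^\gamma J_{|n-\alpha|}^2(\sqrt{\mu}\,s)\,d\mu$ vanishes at $s=0$ (every $|n-\alpha|>0$ forces $J_{|n-\alpha|}(0)=0$), while its mean value over large intervals equals $1$, as can be shown by a Weyl-type averaging argument using the asymptotic $J_\nu^2(z)\sim(\pi z)^{-1}\cos^2(z-\nu\pi/2-\pi/4)$. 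A continuous nonnegative function that vanishes at $0$ and has average $1$ at infinity must exceed $1$ somewhere, so its supremum is strictly greater than $1$.

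The main obstacle will be the Weyl-averaging step establishing $R_\gamma(\alpha)>1$: one must control the sum over $n\in\Z$ of the Bessel squares uniformly enough to interchange sum and large-interval average, or else exhibit an explicit $s$ at which $F(s)>1$. By contrast, the spectral-resolution derivation of the kernel formula and the thin-annulus argument for sharpness are routine once the diagonalization is in place.
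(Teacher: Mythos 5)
Your diagonalization, the reduction to a pointwise bound on the diagonal of the kernel, and the thin-annulus argument for sharpness all match the strategy behind the paper's Proposition \ref{diamaglocab} (whose full proof is in \cite{FrHa}); that part is fine, and your check that $K_0(x,x)=\lambda^{\gamma+1}/(4\pi(\gamma+1))$ is correct. The gap is in the step the paper explicitly identifies as ``the main point'', namely $R_\gamma(\alpha)>1$. Your argument is: $F(0)=0$, $F$ is continuous and non-negative, and $F$ has mean value $1$ over large intervals, hence $\sup F>1$. This implication is false. The function $F(s)=1-e^{-s}$ is continuous, non-negative, vanishes at $0$, satisfies $T^{-1}\int_0^T F(s)\,ds\to 1$, and yet never exceeds $1$; nothing in your hypotheses rules out that your $F$ approaches its limiting value $1$ monotonically from below, in which case $R_\gamma(\alpha)=1$ and the supremum in \eqref{eq:diamaglocabconst} is not attained. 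A Ces\`aro-mean (Weyl-averaging) statement can never yield strict exceedance of the limit.

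What is actually needed --- and what \cite{FrHa} proves --- is a second-order asymptotic expansion with an explicitly \emph{oscillating, sign-changing} leading correction that dominates the error term:
\begin{equation*}
\sum_{n\in\Z}\int_0^1(1-\mu)^\gamma J^2_{|n-\alpha|}(\sqrt\mu\, s)\,d\mu
=\frac1{\gamma+1}-\Gamma(\gamma+1)\,\frac{\sin\alpha\pi}{\pi}\,\frac{\sin(2s-\tfrac12\gamma\pi)}{s^{2+\gamma}}
+\mathcal O(s^{-3-\gamma}),\qquad s\to\infty.
\end{equation*}
Since $\sin\alpha\pi\neq0$ for $\alpha\in\R\setminus\Z$ and the sine factor changes sign, there are arbitrarily large $s$ at which $F(s)>1$, which is what gives $R_\gamma(\alpha)>1$. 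Deriving this expansion (with uniform control of the sum over $n$, which simplifies for $\alpha\in\tfrac12+\Z$ via $\sum_n J^2_{|n-1/2|}(t)=\tfrac2\pi\int_0^{2t}s^{-1}\sin s\,ds$) is the real content of the proposition, and your proposal does not supply it.
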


To be more precise, because of the non-integrable singularity of $|\mathbf{A}|^2$ at the origin the operator $(\mathbf{D}-\mathbf{A})^2$ has to be defined by closing the quadratic form $\int_{\R^2} |(\mathbf{D}-\mathbf{A})u|^2\,dx$ on  $C_0^\infty(\R^2 \setminus\{0\})$. Note that this vector potential corresponds to the magnetic field $\curl \mathbf{A}=2\pi\alpha\delta_0$. We assume that $\alpha\in\R\setminus\Z$ since for integer $\alpha$ the magnetic field can be gauged away, making \eqref{eq:diamaglocab} trivially true with constant unity.

The fact that the sharp constant satisfies $R_\gamma(\alpha)>1$ means that the generalized diamagnetic inequality \eqref{eq:diamagloc} is violated. This effect is rather minute, however, since numerically
\begin{equation}
 \label{eq:diamaglocabconstnum}
R_0(\alpha)\leq 1.054 \,,
\qquad
R_1(\alpha)\leq 1.034 \,,
\qquad
R_2(\alpha)\leq 1.011
\end{equation}
for all $\alpha\in\R$. A variation of the Aizenman-Lieb argument \cite{AiLi} shows that $R_\gamma(\alpha)$ is non-increasing with respect to $\gamma$ for fixed $\alpha$. We have numerical evidence, but no proof, that $R_\gamma(\alpha)$ is an increasing function of $\alpha\in [0,1/2]$ for fixed $\gamma$.

For the proof of Proposition \ref{diamaglocab} we refer to \cite{FrHa} and restrict ourselves here to explaining the strategy. We note that \eqref{eq:diamaglocab} is equivalent to the pointwise bound
$$
\left((\mathbf{D}-\mathbf{A})^2-\lambda\right)_-^\gamma(x,x) 
\leq R_\gamma(\alpha)\ \left(-\Delta-\lambda\right)_-^\gamma(x,x)
\qquad
\text{for all $x\in\R^2$}
$$
for the corresponding integral kernels. Diagonalizing $(\mathbf{D}-\mathbf{A})^2$ explicitly we find that the left side is given by
$$
\frac1{4\pi} \sum_{n\in\Z} \int_0^\infty (\mu-\lambda)_-^\gamma \, J^2_{|n-\alpha|}(\sqrt\mu |x|)\,d\mu \,,
$$
which shows immediately that $R_\gamma(\alpha)$ is the sharp constant in \eqref{eq:diamaglocab}. The main point is to prove the strict inequality $R_\gamma(\alpha)>1$. Intuitively, this is a consequence of the fact that the spectral density $(4\pi)^{-1} \sum_{n\in\Z} J^2_{|n-\alpha|}(\sqrt\mu |x|)$ oscillates around its limiting value $(4\pi)^{-1}$ as $|x|\to\infty$. In \cite{FrHa} we make this precise by deriving the asymptotics
$$
\sum_{n\in\Z} \int_0^1 (1-\mu)^\gamma \, J^2_{|n-\alpha|}(\sqrt\mu s)\,d\mu
= \frac1{\gamma+1} - \Gamma(\gamma+1) \frac{\sin\alpha\pi}{\pi} \frac{\sin(2s-\tfrac12\gamma\pi)}{s^{2+\gamma}} 
+\mathcal O(s^{-3-\gamma})
$$
as $s\to\infty$ which, of course, implies $R_\gamma(\alpha)>1$. We note that in the case $\alpha\in\frac12+\Z$ one has $\sum_{n\in\Z} J^2_{|n-1/2|}(t) = \frac2\pi \int_0^{2t} s^{-1} \sin s \,ds$, which simplifies the proof of the asymptotics considerably.


\subsection{Sharp estimates in the case of a homogeneous magnetic field}\label{sec:hom}

In the case of a homogeneous magnetic field one can determine the sharp constants in \eqref{eq:blymagnonsharpd}. For $\gamma\geq1$ this is (in a slightly weaker, Legendre-transformed form) due to Erd\H os, Loss and Vougalter \cite{ErLoVo}, while the inequality for $0\leq\gamma<1$ appeared in \cite{FrLoWe}. Note that both papers contain results for $d\geq 3$ as well.

\begin{theorem}\label{blyhom}
Let $\Omega\subset\R^2$ be an open set of finite measure and $\mathbf{A}(x)=\frac B2(-x_2,x_1)^T$ for some $B>0$. Then for any $\gamma\geq 0$ one has
\begin{equation}\label{eq:blyhom}
  \tr\left(H_\Omega(\mathbf{A})-\lambda\right)_-^\gamma 
  \leq \rho_{\gamma,2}^\hom L_{\gamma,2}^{\cl} \lambda^{\gamma+1} |\Omega|
\qquad\text{for all}\ \lambda>0
\end{equation}
with the constant
\begin{equation*}
  \rho_{\gamma,2}^\hom =
  \begin{cases} 2 & \text{if} \ \gamma=0\,, \\
  2 \left(\gamma/(\gamma+1) \right)^\gamma & \text{if} \ 0<\gamma<1 \,,\\
  1 & \text{if} \ \gamma\geq 1\,.
 \end{cases}
\end{equation*}
The constant $\rho_{\gamma,2}^\hom$ is sharp in the following sense.
\begin{enumerate}
 \item For any $0\leq\gamma<1$, any bounded, open set $\Omega\subset\R^2$ and any $\epsilon>0$ there exist $B>0$ and $\lambda>0$ such that
$$
\tr\left(H_\Omega(\mathbf{A})-\lambda\right)_-^\gamma \geq (1-\epsilon) \rho_{\gamma,2}^\hom L_{\gamma,d}^{\cl} \lambda^{\gamma+1} |\Omega| \,.
$$
\item For any $\gamma\geq1$, any open set $\Omega\subset\R^2$ of finite measure, any $B>0$ and any $\epsilon>0$ there exists a $\lambda>0$ such that
$$
\tr\left(H_\Omega(\mathbf{A})-\lambda\right)_-^\gamma \geq (1-\epsilon) \rho_{\gamma,2}^\hom L_{\gamma,d}^{\cl} \lambda^{\gamma+1} |\Omega| \,.
$$
\end{enumerate}
\end{theorem}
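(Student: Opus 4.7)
The plan is to split the argument according to the three ranges of $\gamma$ and treat the upper bound first. For $\gamma\geq 1$ the idea is to reduce to the local diamagnetic inequality of Proposition \ref{diamaglochom} by a Jensen-type step. For $0\leq\gamma<1$ one can then descend from the $\gamma=1$ bound just obtained by applying Lemma \ref{goingdown}. Sharpness in (ii) will follow directly from the semiclassical Weyl asymptotics; sharpness in (i) is the main obstacle and requires a quantitative construction of trial functions in the lowest Landau level.

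For $\gamma\geq 1$, let $\{u_j\}$ be an $L_2(\Omega)$-orthonormal basis of eigenfunctions of $H_\Omega(\mathbf{A})$ with eigenvalues $\{\mu_j\}$, and let $\tilde u_j$ denote the extension by zero to $\R^2$. Since $\mathbf{A}$ is smooth on $\R^2$, $\tilde u_j$ lies in the form domain of $(\mathbf{D}-\mathbf{A})^2$ and $\langle \tilde u_j,(\mathbf{D}-\mathbf{A})^2\tilde u_j\rangle = \mu_j$. The function $\phi_\lambda(s):=(s-\lambda)_-^\gamma$ is non-negative, convex and integrable on $[0,\infty)$ for $\gamma\geq 1$, so Jensen's inequality in spectral form gives
\begin{equation*}
(\mu_j-\lambda)_-^\gamma
= \phi_\lambda\bigl(\langle\tilde u_j,(\mathbf{D}-\mathbf{A})^2\tilde u_j\rangle\bigr)
\leq \langle\tilde u_j,\phi_\lambda((\mathbf{D}-\mathbf{A})^2)\tilde u_j\rangle .
\end{equation*}
Using $\tilde u_j=\chi_\Omega\tilde u_j$ and summing on $j$ one obtains $\tr(H_\Omega(\mathbf{A})-\lambda)_-^\gamma\leq\tr\bigl(\chi_\Omega\phi_\lambda((\mathbf{D}-\mathbf{A})^2)\chi_\Omega\bigr)$, and Proposition \ref{diamaglochom} bounds this by $\tr\bigl(\chi_\Omega(-\Delta-\lambda)_-^\gamma\chi_\Omega\bigr)$. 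A Fourier computation (using $\int_{\R^2}(|\xi|^2-\lambda)_-^\gamma\,d\xi=\pi\lambda^{\gamma+1}/(\gamma+1)$ together with \eqref{eq:constsc}) identifies the right-hand side with $L^\cl_{\gamma,2}|\Omega|\lambda^{\gamma+1}$, proving $\rho^\hom_{\gamma,2}=1$.

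For $0\leq\gamma<1$ I would apply Lemma \ref{goingdown} to the bound just obtained, with $\sigma=1$, $\kappa=1$, $C=L^\cl_{1,2}|\Omega|$. The values $b(\gamma,1)=\gamma^\gamma(1-\gamma)^{1-\gamma}$ (or $1$ at $\gamma=0$), $b(\gamma+1,2)=\tfrac14(\gamma+1)^{\gamma+1}(1-\gamma)^{1-\gamma}$, and $L^\cl_{1,2}/L^\cl_{\gamma,2}=(\gamma+1)/2$ combine to yield the prefactor $2\bigl(\gamma/(\gamma+1)\bigr)^\gamma$ for $0<\gamma<1$ and $2$ for $\gamma=0$, matching $\rho^\hom_{\gamma,2}$ exactly. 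Note that this simple chain of reductions uses neither full semiclassical asymptotics nor any fine information about the spectrum of $H_\Omega(\mathbf{A})$.

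Sharpness in (ii) follows at once from the Weyl-type asymptotics $\lambda^{-\gamma-1}\tr(H_\Omega(\mathbf{A})-\lambda)_-^\gamma\to L^\cl_{\gamma,2}|\Omega|$ as $\lambda\to\infty$, recalled in the appendix. The hard part is (i): here the strategy is to choose $\lambda$ strictly between the first two Landau levels so that only the lowest level contributes. I would take $\lambda=(\gamma+1)B$, which lies in the gap $(B,3B)$ for $\gamma<2$; a one-variable optimization shows that $B\mapsto B(\lambda-B)^\gamma$ is maximized on this interval precisely at this value of $B$, with maximum $\gamma^\gamma\lambda^{\gamma+1}/(\gamma+1)^{\gamma+1}$, which, after multiplication by $|\Omega|/(2\pi)$, equals exactly $\rho^\hom_{\gamma,2}L^\cl_{\gamma,2}\lambda^{\gamma+1}|\Omega|$. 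The technical crux is to show that, as $B\to\infty$ with $\Omega$ bounded, $H_\Omega(\mathbf{A})$ has at least $(1-\epsilon)B|\Omega|/(2\pi)$ eigenvalues in $[B,B+o(B)]$. This I would prove variationally: construct a trial subspace of the required dimension spanned by smooth cutoffs of magnetic coherent states from the first Landau level of the full-plane operator, centred on a maximal net of points of $\Omega$ at distance $\gg B^{-1/2}$ from $\partial\Omega$; on the scale $B^{-1/2}$ of the coherent states the cutoff and overlap errors contribute only $o(B)$ to the quadratic form per state, so by the min-max principle the desired lower bound follows.
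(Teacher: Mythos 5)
Your treatment of the inequality itself and of the sharpness for $\gamma\geq1$ is correct and coincides with the paper's argument: the per-eigenfunction Jensen step is exactly the Berezin--Lieb inequality used there (combined with Proposition \ref{diamaglochom} and the Fourier computation of $\tr(\chi_\Omega(-\Delta-\lambda)_-^\gamma\chi_\Omega)$), the descent to $0\leq\gamma<1$ via Lemma \ref{goingdown} with $\sigma=\kappa=1$ reproduces the constant $2\left(\gamma/(\gamma+1)\right)^\gamma$ precisely as in the paper, and part (ii) is indeed just the Weyl-type asymptotics of the appendix.

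The gap is in part (i). Your choice $\lambda=(\gamma+1)B$ and the computation showing that $\frac{B}{2\pi}(\lambda-B)^\gamma|\Omega|$ then equals $\rho_{\gamma,2}^\hom L_{\gamma,2}^{\cl}\lambda^{\gamma+1}|\Omega|$ are right, but the reduction to the claim that $H_\Omega(\mathbf{A})$ has at least $(1-\epsilon)B|\Omega|/(2\pi)$ eigenvalues near $B$ is where the proposed construction fails. A net of centres at mutual distance $\gg B^{-1/2}$ contains only $o(B)\,|\Omega|$ points, far fewer than the $(1-\epsilon)B|\Omega|/(2\pi)$ you need; to capture a $(1-\epsilon)$ fraction of the Landau degeneracy the centres must be packed at spacing $\approx\sqrt{2\pi/B}$, i.e.\ at near-critical density, and then neighbouring lowest-Landau-level coherent states have overlaps $e^{-B|z-z'|^2/4}$ of order one. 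The Gram matrix is therefore not a small perturbation of the identity, and the ``$o(B)$ per state'' bookkeeping does not control the min--max count (at exactly critical density the lattice coherent states even become linearly dependent); a uniform lower bound on the Gram matrix at density $1-\epsilon$ is a nontrivial sampling/completeness statement, not a routine cutoff estimate. The paper avoids this entirely: it invokes the known fact that the integrated density of states of the Landau Hamiltonian is $\mu\mapsto\frac{b}{2\pi}\#\{k:\ b(2k-1)<\mu\}$, giving $|l\Omega|^{-1}\tr(H_{l\Omega}(b\mathbf{A}_1)-\mu)_-^\gamma\to\frac{b}{2\pi}\sum_k(b(2k-1)-\mu)_-^\gamma$ as $l\to\infty$ (Doi--Iwatsuka--Mine), and then converts the thermodynamic limit at fixed $b,\mu$ into the limit $B\to\infty$ at fixed $\Omega$ with $\lambda/B=\gamma+1$ via the scaling $H_{l\Omega}(b\mathbf{A}_1)\cong l^{-2}H_\Omega(l^2b\mathbf{A}_1)$. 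You would need either to cite such an IDS result or to supply a genuinely quantitative packing argument; as written, the construction does not close.
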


As we have recalled at the beginning of this section, the Berezin-Li-Yau inequality \cite{Be1, LY} states that if $\gamma\geq1$, then
\begin{equation}\label{eq:bly}
\tr\left(-\Delta_\Omega-\lambda\right)_-^\gamma \leq L_{\gamma,2}^{\cl} \, \lambda^{\gamma+1} \, |\Omega|
\qquad\text{for all}\ \lambda>0
\end{equation}
for the eigenvalues of the Dirichlet Laplacian $-\Delta_\Omega$. The first part of Theorem \ref{blyhom} says that the same is true if a homogeneous magnetic field is added. Moreover, recall that P\'olya \cite{Po} has shown \eqref{eq:bly} for \emph{all} $\gamma\geq 0$ if $\Omega$ is \emph{tiling} and has conjectured that it is true for arbitrary $\Omega$. The second part of our Theorem \ref{blyhom} says that \eqref{eq:bly} is \emph{not} true for $0\leq\gamma<1$ if a homogeneous magnetic field, not even if $\Omega$ is tiling. This provides examples which show that in the abstract Theorems \ref{diamag} and \ref{diamagdisc} one does not have the same constant in the magnetic and the non-magnetic estimates.

The sharpness of the constants in Theorem \ref{blyhom} has different origins for $\gamma\geq1$ and for $0\leq\gamma<1$. As the proof below will show, for $\gamma\geq1$ it is attained in the limit $\lambda\to\infty$ with $B$ arbitrary but fixed, while for $0\leq\gamma<1$ it is attained in the limit $\lambda\to\infty$ and $B\to\infty$ with $\lambda/B$ fixed at a certain, $\gamma$-dependent value. For a connection between our sharpness results and sharpness results for the harmonic oscillator \cite{dB,HeRo} we refer to \cite{FrLoWe}. We also emphasize that the sharpness result of Theorem \ref{blyhom} for $0\leq\gamma<1$ is stronger than the one stated in \cite{FrLoWe}.

\begin{proof}
We begin with the case $\gamma\geq 1$. The Berezin-Lieb inequality \cite{Be2,Li1} states that if $H$ is a self-adjoint operator and $P$ a projection, then for any convex $\phi$, $\tr P\phi(PHP)P \leq \tr P\phi(H)P$. (This is easily verified using Jensen's inequality.) We apply this with $H=H_{\R^2}(\mathbf{A})$, $P=\chi_\Omega$ and $\phi(\mu)=(\mu-\lambda)_-^\gamma$. Since $\chi_\Omega H_{\R^2}(\mathbf{A}) \chi_\Omega \leq H_\Omega(\mathbf{A})\oplus 0$ in $L_2(\R^2)=L_2(\Omega)\oplus L_2(\R^2\setminus\Omega)$, we obtain from Proposition \ref{diamaglochom}
\begin{align*}
\tr_{L_2(\Omega)}\left(H_\Omega(\mathbf{A})-\lambda\right)_-^\gamma 
& \leq \tr_{L_2(\R^2)}\left( \chi_\Omega \left(H_{\R^2}(\mathbf{A})-\lambda\right)_-^\gamma \chi_\Omega\right) \\
& \leq \tr_{L_2(\R^2)}\left( \chi_\Omega \left(-\Delta-\lambda\right)_-^\gamma \chi_\Omega\right)
= L_{\gamma,2}^{\cl} \lambda^{\gamma+1} |\Omega| \,.
\end{align*}
This is the claimed inequality for $\gamma\geq 1$. The inequality for $0\leq\gamma<1$ follows from that for $\gamma=1$ by Lemma \ref{goingdown}.

The sharpness of $\rho_{\gamma,2}^\hom$ for $\gamma\geq1$ follows immediately from the Weyl-type asymptotics recalled at the beginning of this section. The sharpness for $0\leq\gamma<1$ will be a consequence of the fact that for $0\leq\gamma<1$ and $B>0$ one has
\begin{equation}
 \label{eq:diamagsc}
\sup_{\lambda>0} \frac{\frac B{2\pi}\sum_{k\in\N} \left(B(2k-1)-\lambda\right)_-^\gamma}{L_{\gamma,2}^{\cl}
    \lambda^{\gamma+1}} = \rho_{\gamma,2}^\hom \,,
\end{equation}
as is easily verified, see also \cite{FrLoWe}. For $0<\gamma<1$ the supremum is attained for $\lambda=B(\gamma+1)$ and for $\gamma=0$ in the limit $\lambda\to B_+$. In order to prove the sharpness for $0<\gamma<1$ (the case $\gamma=0$ is similar and will be omitted) we recall that $\mu\mapsto\frac{b}{2\pi}\#\{k\in\N :\ b(2k-1)<\mu\}$ is the integrated density of states of the operator $(\mathbf{D}-b \mathbf{A}_1)^2$ in $L_2(\R^2)$, where $\mathbf{A}_1(x)=\frac 12(-x_2,x_1)^T$. This means that for any bounded, open set $\Omega$ and any $b$ and $\mu$ one has $|l\Omega|^{-1} \tr(H_{l\Omega}(b\mathbf{A}_1)-\mu)_-^\gamma\to \frac{b}{2\pi}\sum_{k\in\N} (b(2k-1)-\mu)_-^\gamma$ as $l\to\infty$. (Assuming only the boundedness of $\Omega$ this is shown in \cite{DoIwMi} -- indeed, in order to apply the results of that paper note that by dominated convergence $|\{x\in\Omega: \ \dist(x,\partial\Omega)<\epsilon\}|\to 0$ as $\epsilon\to0$ for any open set of finite measure; for related earlier results we refer to \cite{CV,Ta,Tr}.) Since $H_{l\Omega}(b\mathbf{A}_1)$ is unitarily equivalent to $l^{-2} H_\Omega(l^2 b\mathbf{A}_1)$ by scaling, we see that for any $\epsilon>0$ and $\mu>b$ there exists an $L=L(\epsilon,b,\mu,\Omega)$ such that for all $l\geq L$ one has
$$
\tr\left( H_{\Omega}(l^2 b\mathbf{A}_1)-l^2 \mu\right)_-^\gamma\geq (1-\epsilon) \, |\Omega| \, \frac{l^2 b}{2\pi}\sum_{k\in\N} \left(l^2 b(2k-1)-l^2 \mu\right)_-^\gamma \,.
$$
Hence by \eqref{eq:diamagsc} and the corresponding equality statement we see that with the choice $\mu=b(\gamma+1)$ one has
$$
\tr\left( H_{\Omega}(l^2 b\mathbf{A}_1)-l^2 \mu\right)_-^\gamma\geq (1-\epsilon) \, \rho_{\gamma,2}^\hom L_{\gamma,2}^\cl |\Omega| \, 
\left(l^2 \mu\right)^{\gamma+1} \,.
$$
This implies the assertion with $B=L^2 b$ and $\lambda=L^2\mu=B(\gamma+1)$.
\end{proof}

Here is an estimate with a right hand side depending on $B$.

\begin{theorem}\label{blyhommod}
Let $\Omega\subset\R^2$ be an open set of finite measure which is tiling and let $\mathbf{A}(x)=\frac B2(-x_2,x_1)^T$ for some $B>0$. Then for any $\gamma\geq 0$ and $\lambda>0$ one has
\begin{equation}\label{eq:blyhommod}
  \tr\left(H_\Omega(\mathbf{A})-\lambda\right)_-^\gamma 
  \leq |\Omega| \, \frac{B}{2\pi}\sum_{k\in\N} \left(B(2k-1)-\lambda\right)_-^\gamma \,.
\end{equation}
If $\gamma\geq 1$, then this is true without the assumption that $\Omega$ is tiling.
\end{theorem}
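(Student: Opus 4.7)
The plan is to handle the two ranges of $\gamma$ separately, since the case $\gamma \geq 1$ reduces to the Berezin-Lieb argument already used for Theorem \ref{blyhom}, while the subconvex case $0 \leq \gamma < 1$ requires the tiling hypothesis and a different route.

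For $\gamma \geq 1$, I would run the first step of the proof of Theorem \ref{blyhom}. The function $\phi(\mu) = (\mu - \lambda)_-^\gamma$ is convex, and the Berezin-Lieb inequality applied with $P = \chi_\Omega$ and $H = H_{\R^2}(\mathbf{A})$, combined with $\chi_\Omega H_{\R^2}(\mathbf{A}) \chi_\Omega \leq H_\Omega(\mathbf{A}) \oplus 0$, yields
\[
\tr (H_\Omega(\mathbf{A}) - \lambda)_-^\gamma \leq \tr \chi_\Omega (H_{\R^2}(\mathbf{A}) - \lambda)_-^\gamma \chi_\Omega.
\]
Rather than then invoking Proposition \ref{diamaglochom}, I would evaluate the right-hand side exactly: the Landau level decomposition gives $(H_{\R^2}(\mathbf{A}) - \lambda)_-^\gamma = \sum_{k\in\N} (B(2k-1) - \lambda)_-^\gamma P_k$, and the identity $\|P_k \chi_\Omega\|_2^2 = |\Omega| B/(2\pi)$ (already established in the proof of Proposition \ref{diamaglochom}) produces precisely the right-hand side of \eqref{eq:blyhommod}.

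For $0 \leq \gamma < 1$ with $\Omega$ tiling, $\phi$ is no longer convex and Berezin-Lieb is unavailable. My plan is to combine the tiling with Dirichlet bracketing and the integrated density of states. Write $\R^2 = \bigsqcup_j g_j \Omega$ modulo null sets for suitable isometries $g_j$, and fix an exhausting sequence $F_l = \bigsqcup_{j \in J_l} g_j \Omega$ made of $N_l := |J_l|$ tiles with $|\partial F_l|/|F_l| \to 0$. For the homogeneous magnetic field each $H_{g_j \Omega}(\mathbf{A})$ is unitarily equivalent to $H_\Omega(\mathbf{A})$ via a magnetic translation (composed with a rotation where the tiling requires one). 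Dirichlet bracketing gives the operator inequality $H_{F_l}(\mathbf{A}) \leq \bigoplus_{j \in J_l} H_{g_j \Omega}(\mathbf{A})$, and the min-max principle together with the fact that $\mu \mapsto (\mu - \lambda)_-^\gamma$ is non-increasing delivers
\[
N_l \, \tr (H_\Omega(\mathbf{A}) - \lambda)_-^\gamma \leq \tr (H_{F_l}(\mathbf{A}) - \lambda)_-^\gamma.
\]
Dividing by $|F_l| = N_l |\Omega|$ and sending $l \to \infty$, the integrated density of states limit of Doi--Iwatsuka--Mine \cite{DoIwMi} (already invoked in the proof of Theorem \ref{blyhom}) forces the right-hand side to converge to $\frac{B}{2\pi} \sum_{k\in\N} (B(2k-1) - \lambda)_-^\gamma$, which yields the claimed bound.

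The main obstacle is the subconvex regime $0 \leq \gamma < 1$: the non-convexity of $(\mu - \lambda)_-^\gamma$ simultaneously invalidates Berezin-Lieb and the generalized diamagnetic inequality of Proposition \ref{diamaglochom}, and it is exactly this failure that forces the tiling hypothesis into the statement and requires the indirect bracketing argument supported by an integrated density of states computation.
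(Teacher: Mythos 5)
Your proposal is correct and follows essentially the same route as the paper: for $\gamma\geq1$ the paper likewise applies the Berezin--Lieb inequality and then evaluates $\tr\left(\chi_\Omega\left(H_{\R^2}(\mathbf{A})-\lambda\right)_-^\gamma\chi_\Omega\right)$ explicitly via the Landau-level decomposition, and for general $\gamma\geq0$ with $\Omega$ tiling it invokes P\'olya's original bracketing argument with the Weyl asymptotics replaced by the integrated density of states of Doi--Iwatsuka--Mine, which is exactly your exhaustion-by-tiles argument with magnetic translations. Your write-up simply fills in the details that the paper delegates to \cite{FrLoWe}.
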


Inequality \eqref{eq:blyhommod} for $\gamma\geq1$ is \emph{stronger} than \eqref{eq:blyhom}. This follows from the mean value property of convex functions as in the proof of Proposition \ref{diamaglochom}. We do not know whether \eqref{eq:blyhommod} is true for $0\leq\gamma<1$ is $\Omega$ is not tiling.

\begin{proof}
 The inequality follows by P\'olya's original argument with the Weyl-type asymptotics replaced by the result about the integrated density of states mentioned in Theorem \ref{blyhom}; see \cite{FrLoWe} for details. The inequality for $\gamma\geq 1$ follows as in the proof of Theorem \ref{blyhom} from the Berezin-Lieb inequality, but this time we write out $\tr_{L_2(\R^2)}\left( \chi_\Omega \left(H_{\R^2}(\mathbf{A})-\lambda\right)_-^\gamma \chi_\Omega\right)$ explicitly; see the proof of Proposition \ref{diamaglochom}.
\end{proof}

Theorem \ref{blyhommod} suggests to look for good or optimal constants in the inequality
\begin{equation*}
 \tr\left((D_1+\tfrac B2x_2)^2 +(D_2-\tfrac B2x_1)^2 + V(x) \right)_-^\gamma
\leq \tilde\rho_{\gamma} \, \frac{B}{2\pi}\, \sum_{k\in\N} \int_{\R^2} \left(B(2k-1)+V(x) \right)_-^\gamma \,dx \,.
\end{equation*}
(For $\gamma=1$ this inequality follows from \cite[Thm. 5.1]{LiSoYn}, but this implies the inequality for all $\gamma\geq 1$ by the argument of \cite{AiLi}.) In particular, what is the sharp constant in this inequality if $V$ is restricted to be of the form $V(x)=\omega^2 |x|^2 -\lambda$? In this case the eigenvalues are known explicitly. The corresponding result for $B=0$ and $\gamma\geq 1$ is due to de la Bret\`eche \cite{dB}.


\subsection{Improved estimates in the case of an Aharonov-Bohm magnetic field}\label{sec:ab}

Similarly as in the previous subsection the generalized diamagnetic inequality from Proposition \ref{diamaglocab} leads to eigenvalues estimates for the Aharonov-Bohm operator in a domain.

\begin{theorem}\label{blyab}
Let $\Omega\subset\R^2$ be an open set of finite measure and $\mathbf{A}(x)=\frac{\alpha}{|x|^2} (-x_2,x_1)^T$ for some $\alpha\in\R\setminus\Z$. Then for any $1\leq \gamma<3/2$ one has
\begin{equation}\label{eq:blyab}
  \tr\left(H_\Omega(\mathbf{A})-\lambda\right)_-^\gamma 
  \leq R_\gamma(\alpha) L_{\gamma,2}^{\cl} \lambda^{\gamma+1} |\Omega|
\end{equation}
with the constant $R_\gamma(\alpha)$ from \eqref{eq:diamaglocabconst}.
\end{theorem}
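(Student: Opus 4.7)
The strategy is to mirror the proof of Theorem~\ref{blyhom} for $\gamma\geq 1$, replacing the role of Proposition~\ref{diamaglochom} by its Aharonov--Bohm counterpart Proposition~\ref{diamaglocab}. The argument will have three ingredients: the Berezin--Lieb inequality, the local diamagnetic bound \eqref{eq:diamaglocab}, and an explicit Fourier computation for the free Laplacian.

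First I would apply the Berezin--Lieb inequality with the projection $P=\chi_\Omega$, the operator $H=H_{\R^2}(\mathbf{A})$ (the whole-space Aharonov--Bohm operator) and the function $\phi(\mu)=(\mu-\lambda)_-^\gamma$, which is convex precisely because $\gamma\geq 1$. Combined with the form inequality $\chi_\Omega H_{\R^2}(\mathbf{A})\chi_\Omega \leq H_\Omega(\mathbf{A})\oplus 0$ on $L_2(\R^2)=L_2(\Omega)\oplus L_2(\R^2\setminus\Omega)$, this gives
\begin{equation*}
\tr_{L_2(\Omega)}\left(H_\Omega(\mathbf{A})-\lambda\right)_-^\gamma
\leq \tr_{L_2(\R^2)}\left(\chi_\Omega \left(H_{\R^2}(\mathbf{A})-\lambda\right)_-^\gamma \chi_\Omega\right) .
\end{equation*}

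Next, since $\gamma\geq 1>-1$, Proposition~\ref{diamaglocab} is applicable and yields
\begin{equation*}
\tr_{L_2(\R^2)}\left(\chi_\Omega \left(H_{\R^2}(\mathbf{A})-\lambda\right)_-^\gamma \chi_\Omega\right)
\leq R_\gamma(\alpha)\, \tr_{L_2(\R^2)}\left(\chi_\Omega \left(-\Delta-\lambda\right)_-^\gamma \chi_\Omega\right).
\end{equation*}
Finally, the non-magnetic trace on the right is evaluated by the Fourier transform: the integral kernel of $(-\Delta-\lambda)_-^\gamma$ on the diagonal is $(2\pi)^{-2}\int_{\R^2}(|\xi|^2-\lambda)_-^\gamma\,d\xi = \lambda^{\gamma+1}/(4\pi(\gamma+1)) = L_{\gamma,2}^{\cl}\lambda^{\gamma+1}$, so the trace equals $L_{\gamma,2}^{\cl}\lambda^{\gamma+1}|\Omega|$. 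Combining the three inequalities gives \eqref{eq:blyab}.

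The only real subtleties concern technical justifications, not the structure of the proof. The lower bound $\gamma\geq 1$ is dictated by convexity of $(\cdot-\lambda)_-^\gamma$, which is essential for Berezin--Lieb; the restriction $\gamma<3/2$ merely reflects that for $\gamma\geq 3/2$ the Laptev--Weidl bound with constant one already beats $R_\gamma(\alpha)>1$. The point that will require a brief justification is the form inequality $\chi_\Omega H_{\R^2}(\mathbf{A})\chi_\Omega \leq H_\Omega(\mathbf{A})\oplus 0$ in the Aharonov--Bohm setting, since $H_{\R^2}(\mathbf{A})$ is defined by closing the form on $C_0^\infty(\R^2\setminus\{0\})$: one has to check that functions in the form domain of $H_\Omega(\mathbf{A})$, extended by zero, lie in the form domain of $H_{\R^2}(\mathbf{A})$ and that the two quadratic forms agree on this subspace. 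This follows from a density/approximation argument using compactly supported test functions avoiding the origin; a separate small argument is needed when $0\in\Omega$ to accommodate the singularity, but since $C_0^\infty(\Omega\setminus\{0\})$ is a form core for $H_\Omega(\mathbf{A})$ in that case as well, this is straightforward. Everything else is routine.
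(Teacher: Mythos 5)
Your proposal is correct and follows essentially the same route as the paper, which proves Theorem \ref{blyab} exactly by combining the Berezin--Lieb inequality (as in the first part of Theorem \ref{blyhom}) with Proposition \ref{diamaglocab} and the explicit Fourier evaluation of the non-magnetic trace. The additional remarks on the form inequality $\chi_\Omega H_{\R^2}(\mathbf{A})\chi_\Omega \leq H_\Omega(\mathbf{A})\oplus 0$ in the Aharonov--Bohm setting are a sensible technical supplement that the paper leaves implicit.
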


Comparing \eqref{eq:diamaglocabconstnum} with \eqref{eq:blymagnonsharpconst2} we see that \eqref{eq:blyab} is a slight improvement over \eqref{eq:blymagnonsharpd} for $\gamma$ greater or equal, but close to $1$. Our proof below works also for $\gamma\geq 3/2$, but in this case the Laptev-Weidl result yields \eqref{eq:blyab} with $R_\gamma(\alpha)$ replaced by $1$. 

\begin{proof}
Similarly as in the first part of Theorem \ref{blyhom} the assertion follows from the Berezin-Lieb inequality and Proposition \ref{diamaglocab}.
\end{proof}


\subsection{Upper bounds on eigenvalues}\label{sec:magdomain}

So far in this section, we have been interested in lower bounds on eigenvalues of $H_\Omega(\mathbf{A})$, that is, in \emph{upper} bounds on $\tr\left(H_\Omega(\mathbf{A})-\lambda\right)_-^\gamma$. In this subsection we shall have a brief look at \emph{lower} bounds on this quantity. We assume that $\Omega\subset\R^d$ is open and that $H_\Omega(\mathbf{A})$ has discrete spectrum. For this assumption to hold it is sufficient that the Dirichlet Laplacian $H_\Omega(0)=-\Delta_\Omega$ has discrete spectrum (which is, in particular, the case if $\Omega$ has finite measure). Indeed, $-\Delta_\Omega$ having discrete spectrum is equivalent to $\exp(-t(-\Delta_\Omega))$ being compact. According to the diamagnetic inequality and a theorem of  Dodds, Fremlin and Pitt \cite{DoFr,Pi} this implies that $\exp(-tH_\Omega(\mathbf{A}))$ is compact, and hence that $H_\Omega(\mathbf{A})$ has discrete spectrum. Our lower bound has the following form.

\begin{theorem}\label{magdomain}
 Let $\Omega\subset\R^d$, $d\geq 2$, be an open set and let $\mathbf{A}\in L_{2,\loc}(\Omega)$ such that $H_\Omega(\mathbf{A})$ has discrete spectrum. Then for all $\gamma\geq 1$ and $\lambda\geq 0$ one has
  \begin{equation}\label{eq:magdomain}
   \tr\left(H_\Omega(\mathbf{A})-\lambda\right)_-^\gamma
    \geq \ell_{\gamma,d} \lambda_1^{-d/2}(\lambda-\lambda_1)_+^{\gamma+d/2}
  \end{equation}
where $\lambda_1=\inf\spec H_\Omega(\mathbf{A})$ and
\begin{equation}\label{eq:magdomainconst}
\ell_{\gamma,d}:=
\frac{\Gamma(\gamma+1)\ \Gamma(2+d/2)}{\Gamma(\gamma+1+d/2)} \
\frac{j_{(d-2)/2}^2 J_{d/2}^2(j_{(d-2)/2})}{d(d+2)} \,.
\end{equation}
Here $J_\nu$ denotes the Bessel function of order $\nu$ and $j_\nu$ its first positive zero.
\end{theorem}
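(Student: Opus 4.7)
We may assume $\lambda>\lambda_1$; otherwise both sides of \eqref{eq:magdomain} vanish. The plan proceeds in three stages.

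\emph{Stage 1: Reduction to $\gamma=1$.} The Aizenman--Lieb identity
$$(\lambda-t)_+^\gamma = \frac{1}{B(2,\gamma-1)}\int_0^\infty s^{\gamma-2}(\lambda-s-t)_+\,ds \qquad (\gamma>1),$$
applied with $t=H_\Omega(\mathbf A)$ inside the trace, reduces the problem to the base case $\gamma=1$ with constant $\ell_{1,d}=j_{(d-2)/2}^2 J_{d/2}^2(j_{(d-2)/2})/(d(d+2))$; the general constant $\ell_{\gamma,d}$ of \eqref{eq:magdomainconst} emerges from evaluating the resulting Beta integral with $\mu=\lambda-\lambda_1$.

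\emph{Stage 2: Ground-state substitution.} Write the ground state as $\psi_1=\rho e^{i\theta}$. The real and imaginary parts of $(\mathbf D-\mathbf A)^2\psi_1=\lambda_1\psi_1$ yield $-\Delta\rho=(\lambda_1-|\nabla\theta-\mathbf A|^2)\rho$ and the current conservation $\operatorname{div}(\rho^2(\nabla\theta-\mathbf A))=0$. Combining these via integration by parts, for any real-valued $f$ in a suitable class one obtains
$$\int_\Omega|(\mathbf D-\mathbf A)(f\psi_1)|^2\,dx=\lambda_1\int_\Omega f^2\rho^2\,dx+\int_\Omega|\nabla f|^2\rho^2\,dx.$$
Applying the min-max principle with trial functions $u=f\psi_1$ shows $\lambda_k(H_\Omega(\mathbf A))\leq\lambda_1+\nu_k$, where $\nu_k$ are the Neumann eigenvalues of the weighted Laplacian $L:=-\rho^{-2}\operatorname{div}(\rho^2\nabla\cdot)$ on $\Omega$ (so $\nu_1=0$). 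Consequently $\tr(H_\Omega(\mathbf A)-\lambda)_-\geq\tr(L-(\lambda-\lambda_1))_-$, reducing the problem to a lower bound for $L$.

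\emph{Stage 3: Trial-function argument for $L$.} Set $R:=j_{(d-2)/2}/\sqrt{\lambda_1}$ so that the first Dirichlet eigenvalue of $-\Delta$ on $B_R$ equals $\lambda_1$, and let $\chi\geq0$ be that eigenfunction (extended by zero). For $y\in\R^d$ introduce trial functions $f_y(x):=\chi(x-y)\in L^2(\rho^2\,dx)$, whose Rayleigh quotient with respect to $L$ is $\int|\nabla\chi(\cdot-y)|^2\rho^2 / \int\chi^2(\cdot-y)\rho^2$. Apply the Berezin--Lieb inequality with the convex $\phi(t)=((\lambda-\lambda_1)-t)_+$ to this coherent family. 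The Fubini identities
$$\int\int\chi^2(x-y)\rho^2(x)\,dx\,dy=\|\chi\|_{L^2}^2,\quad \int\int|\nabla\chi(x-y)|^2\rho^2(x)\,dx\,dy=\lambda_1\|\chi\|_{L^2}^2$$
(using $\int\rho^2=1$ and $\|\nabla\chi\|^2=\lambda_1\|\chi\|^2$) reduce the estimate to explicit computations for $\chi$ on $B_R$. The Bessel identity
$$\int_0^j u\,J_{(d-2)/2}^2(u)\,du=\frac{j^2}{2}J_{d/2}^2(j)$$
gives $\|\chi\|_{L^2(B_R)}^2=\tfrac{1}{2}\omega_{d-1}R^d J_{d/2}^2(j)$, while the ratio of second to zeroth moments $\int|x|^2\chi^2\,dx / \int\chi^2\,dx$ supplies the factor $d(d+2)$. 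Together with $R^d=j^d/\lambda_1^{d/2}$ and a Jensen step on the $y$-average, this yields the constant $\ell_{1,d}$ and the $\lambda_1^{-d/2}$ dependence.

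\emph{Main obstacle.} The principal technical difficulty is the overlap operator $A:=\int|f_y\rangle\langle f_y|\,dy$ arising in Berezin--Lieb: the naive operator-norm bound would introduce an unwanted $\|\rho\|_\infty$-dependence. This is avoided by exploiting the structure of $\chi$ as the first Dirichlet eigenfunction at the \emph{critical} radius $R=j_{(d-2)/2}/\sqrt{\lambda_1}$: the eigenvalue equation $-\Delta\chi=\lambda_1\chi$ on $B_R$ combined with $\int\rho^2=1$ produces exact cancellations, so the final constant depends only on $\lambda_1$ and the Bessel data $j_{(d-2)/2}$, $J_{d/2}(j_{(d-2)/2})$.
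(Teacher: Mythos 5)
Your Stage 1 (the Aizenman--Lieb reduction to $\gamma=1$) is exactly the paper's argument, and the ground-state representation in Stage 2 is in the right spirit: the paper's $\gamma=1$ case (from \cite{magdomain}, building on \cite{La}) does rest on trial functions of the form $f\psi_1$ and the identity $h[f\psi_1]=\lambda_1\int f^2|\psi_1|^2\,dx+\int|\nabla f|^2|\psi_1|^2\,dx$. A caveat: your derivation via the polar decomposition $\psi_1=\rho e^{i\theta}$ is not legitimate in general, since a magnetic ground state may vanish and its phase need not be single-valued; the identity itself, however, follows from a direct computation with the weak eigenvalue equation, so this part is repairable.

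Stage 3 does not work, and the ``main obstacle'' you flag at the end is real and unresolved. Two concrete problems. First, with the fixed radius $R=j_{(d-2)/2}/\sqrt{\lambda_1}$ the family $f_y=\chi(\cdot-y)$ carries no parameter that scales with $\lambda$; averaging the Rayleigh quotients over $y$ and applying Jensen gives a bound of order $(\lambda-\lambda_1)_+$ times a $\lambda$-independent constant, not $(\lambda-\lambda_1)_+^{1+d/2}$. The correct power comes from coherent states carrying a \emph{momentum} parameter: the actual proof uses $f_p(x)=e^{ip\cdot x}\psi_1(x)$, $p\in\R^d$, for which the ground-state representation gives $h[f_p]=(\lambda_1+|p|^2)\|\psi_1\|^2$ and $\int|f_p\rangle\langle f_p|\,(2\pi)^{-d}dp$ is multiplication by $|\psi_1|^2\leq\|\psi_1\|_\infty^2$, whence $\tr(H_\Omega(\mathbf A)-\lambda)_-\geq\|\psi_1\|_\infty^{-2}(2\pi)^{-d}\int(\lambda-\lambda_1-|p|^2)_+\,dp$. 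Second, no ``exact cancellation'' removes $\|\rho\|_\infty$ from your overlap operator: any admissible normalization $\int|f_y\rangle\langle f_y|\,dy\leq N$ on $L_2(\rho^2dx)$ forces $N\gtrsim\|\rho\|_\infty^2$, and no bound using only $\int\rho^2=1$ can be true, because a density concentrated on a small set has too few low-lying weighted Neumann eigenvalues to satisfy \eqref{eq:magdomain}. The indispensable missing ingredient is Chiti's inequality $\|\psi_1\|_\infty^2\leq\mathrm{const}\cdot\lambda_1^{d/2}\|\psi_1\|_2^2$ \cite{Ch}, extended to nonzero $\mathbf A$ via Kato's diamagnetic inequality $|\nabla|\psi_1||\leq|(\nabla-i\mathbf A)\psi_1|$ and a rearrangement comparison with the ball $B_R$; that is where $R=j_{(d-2)/2}/\sqrt{\lambda_1}$ and the factor $j_{(d-2)/2}^2J_{d/2}^2(j_{(d-2)/2})$ actually enter, not through a coherent-state profile. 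Without this step your argument cannot produce the $\lambda_1^{-d/2}$ on the right side of \eqref{eq:magdomain}.
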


Note that the right side of \eqref{eq:magdomain} has the same growth $\lambda^{\gamma+d/2}$ as the right side of \eqref{eq:blymagnonsharpd}. Moreover, $\lambda_1^{-d/2}$ replaces the quantity $|\Omega|$ in \eqref{eq:blymagnonsharpd}. We remark that \eqref{eq:magdomain} together with Yang's inequality yields universal (i.e., independent of $\Omega$) upper bounds on eigenvalue ratios $\lambda_k/\lambda_1$ of $H_\Omega(\mathbf{A})$ which have the optimal growth $k^{2/d}$; see \cite{magdomain}.

In the case $\gamma=1$, this bound was proved in \cite{magdomain} (even with an additional non-negative electric potential $V$), extending a result by \cite{Hr} for the case $\mathbf{A}\equiv 0$. Previously, Laptev \cite{La} had shown a similar inequality for $\mathbf{A}\equiv 0$ with $\lambda_1^{-d/2}$ on the right side replaced by a constant times $\|\omega\|_\infty^{-2}$ (where $-\Delta_\Omega\omega=\lambda_1\omega$ and $\|\omega\|=1$). The proof in \cite{La} can easily be generalized to non-trivial $\mathbf{A}$. Inequality \eqref{eq:magdomain} for $\gamma=1$ is then derived from an estimate of $\|\omega\|_\infty$ in terms of $\lambda_1^{d/4}$. This estimate in the case $\mathbf{A}\equiv 0$ is due to Chiti \cite{Ch} and can be extended to arbitrary $\mathbf{A}$ using Kato's diamagnetic inequality; see \cite{magdomain} for details.

In order to extend the inequality to $\gamma>1$ one can follow the strategy of \cite{AiLi}. Denoting by $B$ the beta function and using the inequality for $\gamma=1$ we obtain
\begin{align*}
 \tr\left(H_\Omega(\mathbf{A})-\lambda\right)_-^\gamma
& = \frac 1{B(2,\gamma-1)} \int_0^\infty \tr\left(H_\Omega(\mathbf{A})-\lambda+t\right)_- t^{\gamma-2} \,dt \\
& \geq \frac 1{B(2,\gamma-1)} \ell_{1,d} \lambda_1^{-d/2} \int_0^\infty (\lambda-t-\lambda_1)_+^{1+d/2} t^{\gamma-2} \,dt \\
& = \frac{B(2+d/2,\gamma-1)}{B(2,\gamma-1)} \ell_{1,d} \lambda_1^{-d/2} (\lambda-\lambda_1)_+^{\gamma+d/2} \,,
\end{align*}
which is the assertion.


\subsection{Sharp estimates in the case of a homogeneous magnetic field. The Neumann case}\label{sec:homneu}

In Subsection \ref{sec:hom} we have proved sharp lower bounds on the eigenvalues of the Dirichlet Laplacian with a homogeneous magnetic field. In this final subsection we shall prove sharp \emph{upper} bounds on the eigenvalues of the \emph{Neumann} Laplacian with a homogeneous magnetic field.

In the non-magnetic case it is known that bound \eqref{eq:bly} for the eigenvalues of the Dirichlet Laplacian has a (reverse) analogue for the Neumann Laplacian $-\Delta_\Omega^N$ (assuming that this operator has discrete spectrum). Indeed, P\'olya \cite{Po} (see also \cite{Ke}) has shown that if $\Omega\subset\R^2$ is \emph{tiling} and $\gamma\geq 0$, then
\begin{equation}\label{eq:blyneu}
\tr\left(-\Delta_\Omega^N-\lambda\right)_-^\gamma \geq L_{\gamma,2}^{\cl} \, \lambda^{\gamma+1} \, |\Omega|
\qquad\text{for all}\ \lambda>0 \,.
\end{equation}
Kr\"oger \cite{Kr} has shown that this bound is valid for arbitrary $\Omega$ provided $\gamma\geq 1$. Our goal is to extend these inequalities to the case of a homogeneous magnetic field.

Let $\Omega\subset\R^2$ be an open set and $\mathbf{A}(x)=\frac B2(-x_2,x_1)^T$ for some $B>0$. We denote by $H_\Omega^N(\mathbf{A})$ the self-adjoint operator in $L_2(\Omega)$ corresponding to the quadratic form $\int_\Omega |(\mathbf{D}-\mathbf{A})u|^2 \,dx$ defined for $u\in L_2(\Omega)\cap H^1_\loc(\Omega)$ such that $(D-A)u\in L_2(\Omega)$.

We shall assume that $H_\Omega^N(A)$ has discrete spectrum. A sufficient condition for this is that the Neumann Laplacian $-\Delta_\Omega^N$ as discrete spectrum. (This sufficiency follows by the same arguments as in Subsection \ref{sec:magdomain} from the diamagnetic inequality for $\exp(-tH_\Omega^N(A))$ in \cite{HuLeMuWa}.)

\begin{theorem}
 \label{homneu}
Let $\Omega\subset\R^2$ be an open set of finite measure which is bounded and tiling, and let $\mathbf{A}(x)=\frac B2(-x_2,x_1)^T$ for some $B>0$. If the spectrum of $H_\Omega^N(\mathbf A)$ is discrete, then for any $\gamma\geq 0$ and $\lambda>0$ one has
\begin{equation}
 \label{eq:homneu}
\tr\left(H_\Omega^N(\mathbf A)-\lambda\right)_-^\gamma 
\geq |\Omega| \, \frac{B}{2\pi}\sum_{k\in\N} \left(B(2k-1)-\lambda\right)_-^\gamma \,.
\end{equation}
If $\gamma\geq 1$, then this is true without the assumption that $\Omega$ is bounded and tiling.
\end{theorem}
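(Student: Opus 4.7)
The plan is to prove \eqref{eq:homneu} in three stages: the base case $\gamma = 1$ for arbitrary $\Omega$ by a Kr\"oger-type trial-function argument with magnetic coherent states, the extension to $\gamma > 1$ via Aizenman--Lieb interpolation, and the case $0 \leq \gamma < 1$ for bounded tiling $\Omega$ via P\'olya's tiling argument combined with convergence of the magnetic integrated density of states on large cubes, exactly as in the proof of Theorem \ref{blyhommod}.

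For $\gamma = 1$ I would introduce the magnetic coherent states $\tilde u_{k,y}(x) := \sqrt{2\pi/B}\,P_k(x,y)$, with $k \in \N$ and $y \in \R^2$, where $P_k$ is the integral kernel of the projection onto the $k$-th Landau level. Since $P_k(y,y) = B/(2\pi)$ and $P_k^2 = P_k$, these vectors are normalized in $L_2(\R^2)$, lie in the $k$-th Landau level (so $\int_{\R^2}|(\mathbf D -\mathbf A)\tilde u_{k,y}|^2\,dx = (2k-1)B$), and their restrictions $u_{k,y} := \tilde u_{k,y}|_\Omega \in H^1(\Omega)$ satisfy the resolution of identity $\sum_k \int_{\R^2} |u_{k,y}\rangle\langle u_{k,y}|\,(B/2\pi)\,dy = I_{L_2(\Omega)}$. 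Setting $a(k,y) := \int_\Omega|(\mathbf D - \mathbf A)\tilde u_{k,y}|^2\,dx = \langle u_{k,y},H_\Omega^N(\mathbf A)u_{k,y}\rangle$ and $b(k,y) := \int_\Omega|\tilde u_{k,y}|^2\,dx = \|u_{k,y}\|^2$, Jensen's inequality applied to the convex function $\mu \mapsto (\lambda - \mu)_+$ and the normalized vectors $u_{k,y}/\|u_{k,y}\|$ gives $(\lambda\,b(k,y) - a(k,y))_+ \leq \langle u_{k,y},(H_\Omega^N(\mathbf A) - \lambda)_- u_{k,y}\rangle$, and integration against the resolution of identity yields
$$\tr (H_\Omega^N(\mathbf A) - \lambda)_- \;\geq\; \frac{B}{2\pi}\sum_{k\in\N}\int_{\R^2}(\lambda\,b(k,y) - a(k,y))_+\,dy.$$
Fubini together with the diagonal identity $\int_{\R^2}|P_k(x,y)|^2\,dy = P_k(x,x) = B/(2\pi)$ gives $\int b(k,y)\,dy = |\Omega|$ and $\int a(k,y)\,dy = (2k-1)B\,|\Omega|$, so by the elementary bound $(\,\cdot\,)_+ \geq (\,\cdot\,)$ one has $\int(\lambda b - a)_+\,dy \geq (\lambda - (2k-1)B)_+|\Omega|$; summing in $k$ yields \eqref{eq:homneu} for $\gamma = 1$.

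The extension to arbitrary $\gamma > 1$ (still for arbitrary $\Omega$) is immediate from Aizenman--Lieb's identity $x_+^\gamma = \gamma(\gamma-1)\int_0^\infty (x-t)_+\,t^{\gamma-2}\,dt$ applied termwise and Fubini. For $0 \leq \gamma < 1$ with $\Omega$ bounded and tiling, I would adapt P\'olya's argument as in Theorem~\ref{blyhommod}, only with the roles of Dirichlet and Neumann exchanged: cover a large cube $C_R$ (up to a boundary layer of measure $o(|C_R|)$) by $N \sim |C_R|/|\Omega|$ isometric copies $\Omega^{(j)}$ of $\Omega$. Gauge invariance for the homogeneous field (via the explicit gauge $x \mapsto -\mathbf A(t)\cdot x$ for translation by $t$) identifies each $H_{\Omega^{(j)}}^N(\mathbf A)$ unitarily with $H_\Omega^N(\mathbf A)$, and Neumann bracketing $\bigoplus_j H_{\Omega^{(j)}}^N(\mathbf A) \leq H_{C_R}^N(\mathbf A)$ yields $N\cdot N(\lambda, H_\Omega^N(\mathbf A)) \geq N(\lambda, H_{C_R}^N(\mathbf A))$ for the counting functions. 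Dividing by $|C_R|$ and letting $R \to \infty$, the magnetic IDS convergence on cubes used in the proof of Theorem~\ref{blyhom} gives the bound for $\gamma = 0$; the case $0 < \gamma < 1$ then follows from $\tr(H_\Omega^N(\mathbf A) - \lambda)_-^\gamma = \gamma\int_0^\lambda N(\mu, H_\Omega^N(\mathbf A))\,(\lambda-\mu)^{\gamma-1}\,d\mu$ and the corresponding identity for the right-hand side of \eqref{eq:homneu}.

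The main obstacle is the step $\gamma = 1$: the Dirichlet analogue in Theorem~\ref{blyhommod} proceeds by compressing $H_{\R^2}(\mathbf A)$ to $L_2(\Omega)$ by $\chi_\Omega$ and invoking Berezin--Lieb directly, but the Neumann operator is not such a compression, so a coherent-state Jensen argument must be used instead. The resulting pointwise Rayleigh quotient $a(k,y)/b(k,y)$ is not in general bounded by $(2k-1)B$ (the boundary terms in Green's identity have no definite sign). What saves the proof is that this pointwise bound is replaced by the \emph{averaged} identity $\int_{\R^2} a(k,y)\,dy = (2k-1)B \int_{\R^2}b(k,y)\,dy$, which, combined with $(\,\cdot\,)_+ \geq (\,\cdot\,)$, is nevertheless enough to close the estimate with the sharp constant.
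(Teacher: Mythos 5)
Your strategy coincides with the paper's: for $\gamma\geq1$ a Berezin--Lieb/coherent-state argument built from the Landau kernels $P_k(\cdot,y)$ restricted to $\Omega$, and for $0\leq\gamma<1$ on bounded tiling domains Kellner's adaptation of P\'olya's argument via the integrated density of states. The cosmetic deviations (pointwise Jensen in $y$ followed by $(\,\cdot\,)_+\geq(\,\cdot\,)$ instead of a single Jensen application with respect to the measure $d(U_{k,y},E(\mu)U_{k,y})\,dy$; doing $\gamma=1$ first and then Aizenman--Lieb rather than taking $\phi(\mu)=(\mu-\lambda)_-^\gamma$ convex directly) are harmless.

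There is, however, one genuine gap, and it sits exactly at the crux. You assert that $\int_{\R^2}a(k,y)\,dy=(2k-1)B\,|\Omega|$ follows ``from Fubini together with the diagonal identity $\int_{\R^2}|P_k(x,y)|^2\,dy=P_k(x,x)=B/2\pi$.'' That identity only yields $\int b(k,y)\,dy=|\Omega|$. What you actually need is, for every fixed $x\in\Omega$,
$$
\int_{\R^2}\left|(\mathbf D_x-\mathbf A(x))P_k(x,y)\right|^2\,dy=(2k-1)B\cdot\frac{B}{2\pi}\,,
$$
and this does \emph{not} follow by moving the covariant derivative through the projection: with $\mathbf Q=\mathbf D-\mathbf A$, the left side is the diagonal of $\sum_jQ_jP_kQ_j$, which is not the diagonal of $\mathbf Q^2P_k=(2k-1)BP_k$, because $P_k$ does not commute with the individual components $Q_j$ (they shift Landau levels; only $\mathbf Q^2$ commutes with $P_k$). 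The identity is true, but proving it is the main computation in the paper's proof: one writes the left side as $\mathbf Q_x\overline{\mathbf Q_y}P_k(x,y)|_{x=y}$ and shows $\left(\mathbf Q_x^2+\overline{\mathbf Q_y}^2-2\mathbf Q_x\overline{\mathbf Q_y}\right)P_k(x,y)|_{x=y}=0$ by separating zeroth, first and second order terms and using that the diagonal $P_k(x,x)=B/2\pi$ is constant, so that its first and second derivatives vanish. You must supply this (or an equivalent rigorous ``cyclicity of the trace per unit volume'' argument); as written, the justification you offer does not deliver the identity your proof rests on. A smaller point: in the tiling step, Neumann bracketing gives $\sum_jN(\lambda,H^N_{\Omega^{(j)}}(\mathbf A))\geq N(\lambda,H^N_V(\mathbf A))$ only for $V=\bigcup_j\Omega^{(j)}$, not for the full cube $C_R$; the standard repair is to bound $N(\lambda,H^N_V(\mathbf A))$ from below by the Dirichlet counting function of a slightly smaller cube contained in $V$ (extend Dirichlet eigenfunctions by zero to get Neumann test functions) and apply the IDS convergence there.
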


\begin{remark}
 Theorem \ref{homneu} being an analogue of Theorem \ref{blyhommod}, we emphasize that the analogue of Theorem \ref{blyhom} is \emph{not} valid. Indeed, if $\Omega$ is, say, a Lipschitz domain and $\gamma\geq 0$, then there exists \emph{no} positive constant $c$ such that for all $\lambda>0$ and all $B>0$ one has
 \begin{equation}\label{eq:homneunot}
 \tr\left(H_\Omega^N(\mathbf A)-\lambda\right)_-^\gamma \geq c \lambda^{\gamma+1} \,.
 \end{equation}
 To see this, we recall that $\mu\mapsto\frac{b}{2\pi}\#\{k\in\N :\ b(2k-1)<\mu\}$ is the integrated density of states of the operator $(\mathbf{D}-b \mathbf{A}_1)^2$ in $L_2(\R^2)$, where $\mathbf{A}_1(x)=\frac 12(-x_2,x_1)^T$, and that the integrated density of states is independent of the choice of boundary conditions \cite{DoIwMi}. Using the same scaling argument as in the proof of Theorem \ref{blyhom} we see, in particular, that as $l\to\infty$
 $$
l^{-2(\gamma+1)} \tr\left( H_{\Omega}^N(l^2 b\mathbf{A}_1)-l^2 \mu\right)_-^\gamma \to 0
\qquad
\text{if}\ \mu< b \,.
$$
This contradicts a bound \eqref{eq:homneunot} with $c>0$. In passing we note that the asymptotics of $\tr\left( H_{\Omega}^N(l^2 b\mathbf{A}_1)-l^2 \mu\right)_-^\gamma$ as $l\to\infty$ are studied in \cite{magweyl}.
\end{remark}

\begin{proof}
 If $\Omega$ is tiling, the inequality follows as in \cite{FrLoWe} by Kellner's argument \cite{Ke} using the integrated density of states of the Landau Hamiltonian in the plane \cite{DoIwMi}.

 Now let $\Omega$ be arbitrary. Our proof of \eqref{eq:homneu} uses some ideas of \cite{Kr,La} (see also \cite{Be2,Li1}). We denote the eigenvalues of $H_\Omega^N(\mathbf A)$ by $\mu_j$ and the corresponding eigenfunctions by $u_j$. Moreover, let $P_k$ be the projection in $L_2(\R^2)$ onto the $k$-th Landau level. Then for any function $\phi$
$$
\tr\phi\left(H_\Omega^N(\mathbf A)\right) = \sum_{j,k} \phi(\mu_j) \|P_k u_j\|^2
= \sum_k \int_{\R^2} \int_0^\infty \phi(\mu) \,d(U_{k,z}, E(\mu) U_{k,z}) \,dz \,,
$$
where $dE(\mu)$ denotes the spectral measure of $H_\Omega^N(\mathbf A)$ and $U_{z,k}$ denotes the restriction of $P_k(\cdot,z)$ to $\Omega$. The measure $d(U_{k,z}, E(\mu) U_{k,z}) \,dz$ on $[0,\infty)\times\R^2$ satisfies
\begin{align*}
\int_{\R^2} \int_0^\infty \,d(U_{k,z}, E(\mu) U_{k,z}) \,dz
& = \int_{\R^2} \|U_{k,z}\|_{L_2(\Omega)}^2 \,dz
= \int_\Omega \left(\int_{\R^2} P_k(x,z) P_k(z,x) \,dz \right) \,dx \\
& = \int_\Omega P_k(x,x) \,dx = \frac B{2\pi} |\Omega| \,.
\end{align*}
Here we used the $P_k^2=P_k$ and that $P_k(x,x)=\frac B{2\pi}$. Hence, if $\phi$ is convex, then Jensen's inequality implies
\begin{align*}
\int_{\R^2} \int_0^\infty \phi(\mu) \,d(U_{k,z}, E(\mu) U_{k,z}) \,dz
& \geq \tfrac B{2\pi} |\Omega| \phi\left( \tfrac {2\pi}B |\Omega|^{-1} \int_{\R^2} \int_0^\infty \mu \,d(U_{k,z}, E(\mu) U_{k,z}) \,dz \right) \\
& = \tfrac B{2\pi} |\Omega| \phi\left( \tfrac {2\pi}B |\Omega|^{-1} \int_{\R^2} \int_\Omega |(\mathbf D-\mathbf A) U_{k,z} |^2 \,dx \,dz \right) \,.
\end{align*}
Hence the assertion will follow (with $\phi(\mu)=(\mu -\lambda)_-^\gamma$) if we show that for any $k$ and any $x\in\Omega$
\begin{equation}\label{eq:homneuproof}
\int_{\R^2} |(\mathbf D_x-\mathbf A(x)) U_{k,z}(x) |^2 \,dz = B(2k-1) \frac B{2\pi} \,.
\end{equation}
In order to prove this equality we denote $\mathbf Q_x := \mathbf D_x-\mathbf A(x)$. Since $P_k^2=P_k$, the left side of \eqref{eq:homneuproof} equals $\mathbf Q_x \overline{\mathbf{ Q}_y} P(x,y)|_{x=y}$. Since $\mathbf Q_x^2 P(x,y) = B(2k-1) P(x,y)$, and hence
$$\mathbf Q_x^2 P(x,y)|_{x=y} = B(2k-1)\frac B{2\pi}
\qquad \text{and} \qquad 
\overline{\mathbf Q_y}^2 P(x,y)|_{x=y} = B(2k-1)\frac B{2\pi}\,,
$$
it suffices to prove that
\begin{equation}\label{eq:kernelder}
\left(\mathbf Q_x^2+ \overline{\mathbf Q_y}^2 - 2 \mathbf Q_x \overline{\mathbf{ Q}_y} \right) P(x,y)|_{x=y} = 0 \,.
\end{equation}
Now we expand the $\mathbf Q_x$ and $\mathbf Q_y$ and write $\mathbf Q_x^2+ \overline{\mathbf Q_y}^2 - 2 \mathbf Q_x \overline{\mathbf{ Q}_y}$ as a sum of three terms, containing only derivatives of order zero, one and two, respectively. The zeroth order term is easily seen to vanish if $x=y$. The first order term is given by $- 2 \left(\mathbf A(x)-\mathbf A(y)\right)\cdot \left(\mathbf{D}_x + \mathbf{D}_y \right)$ and hence also vanishes if $x=y$. Thus \eqref{eq:kernelder} is equivalent to
$$
\left(\mathbf D_x^2+ \mathbf D_y^2 + 2 \mathbf D_x \mathbf{ D}_y \right) P(x,y)|_{x=y}
=0 \,.
$$
The latter equality follows by differentiating the identity $P_k(x,x)=\frac B{2\pi}$ twice with respect to $x$. This concludes the proof of \eqref{eq:homneuproof}.
\end{proof}


\appendix

\section{Weyl-type asymptotics}

The asymptotics for the eigenvalues of the Dirichlet Laplacian on a regular domain are a classical result of Weyl \cite{We}, which was extended by Birman and Solomyak \cite{BiSo} to any bounded open set and finally by Rozenblum \cite{Ro1} to any open set of finite measure (see also \cite{BiSobook}). In the magnetic case these asymptotics are a folk theorem, although it seems difficult to find precise assumptions on $\mathbf{A}$ and $\Omega$ in the literature; for results in the smooth case on the whole space see, e.g., \cite{CoScSe}. Here is a short proof, obtained in collaboration with R. Seiringer and based on ideas from \cite{Be2,Li1}, which requires only minimal assumptions on $\mathbf{A}$ and $\Omega$.

\begin{theorem}
 \label{weyl}
Let $\Omega\subset\R^d$ be an open set of finite measure and $\mathbf{A}\in L_{2,\loc}(\Omega)$. Then
$$
\lim_{\lambda\to\infty} \lambda^{-d/2} N(\lambda,H_\Omega(\mathbf{A})) = L_{0,d}^\cl \, |\Omega| \,.
$$
\end{theorem}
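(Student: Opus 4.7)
The plan is to derive the Weyl asymptotics from the small-time heat trace asymptotic
\begin{equation*}
\lim_{t\to 0^+} t^{d/2}\,\tr e^{-tH_\Omega(\mathbf A)} = (4\pi)^{-d/2}|\Omega|,
\end{equation*}
via Karamata's Tauberian theorem applied to the non-decreasing eigenvalue counting measure. Since $L_{0,d}^{\cl}=(4\pi)^{-d/2}/\Gamma(d/2+1)$, this immediately yields the theorem.

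The upper bound on the heat trace is immediate from the diamagnetic inequality \eqref{eq:diamagintro} combined with domain monotonicity for the free Dirichlet Laplacian: the integral kernel satisfies $|e^{-tH_\Omega(\mathbf A)}(x,y)|\le e^{-tH_\Omega(0)}(x,y)\le (4\pi t)^{-d/2}e^{-|x-y|^2/(4t)}$, and setting $x=y$ and integrating over $\Omega$ gives $\tr e^{-tH_\Omega(\mathbf A)}\le (4\pi t)^{-d/2}|\Omega|$.

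For the matching lower bound I would use a Berezin--Lieb coherent-state argument, following the cited [Be2, Li1]. Fix a real-valued $g_0\in C_0^\infty(\R^d)$ with $\|g_0\|_{L_2}=1$ supported in the unit ball, and for $\delta>0$ and $(q,p)\in\R^d\times\R^d$ set $g_{q,p}(x):=\delta^{-d/2}g_0((x-q)/\delta)\,e^{ip\cdot x}$. A Plancherel computation verifies the resolution of the identity $(2\pi)^{-d}\int_{\R^{2d}}|g_{q,p}\rangle\langle g_{q,p}|\,dq\,dp=I_{L_2(\R^d)}$. Extending $e^{-tH_\Omega(\mathbf A)}$ to $L_2(\R^d)$ by zero and restricting the $q$-integration to $\Omega_\delta:=\{q\in\Omega:\dist(q,\partial\Omega)>\delta\}$, where $g_{q,p}\in C_0^\infty(\Omega)$ lies in the form domain of $H_\Omega(\mathbf A)$, the POVM identity for the trace gives
\begin{equation*}
\tr e^{-tH_\Omega(\mathbf A)}\ge (2\pi)^{-d}\int_{\Omega_\delta\times\R^d}\langle g_{q,p},e^{-tH_\Omega(\mathbf A)}g_{q,p}\rangle\,dq\,dp.
\end{equation*}
The Peierls--Bogoliubov inequality (Jensen applied to the probability measure $d\langle g_{q,p},E_{H_\Omega(\mathbf A)}(\mu)g_{q,p}\rangle$ with the convex function $\mu\mapsto e^{-t\mu}$) bounds the integrand below by $\exp(-t\|(\mathbf D-\mathbf A)g_{q,p}\|^2)$. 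Since $g_0$ is real, the cross terms in $\|(\mathbf D-\mathbf A)g_{q,p}\|^2$ cancel and a direct computation gives
\begin{equation*}
\|(\mathbf D-\mathbf A)g_{q,p}\|^2 = \delta^{-2}\|\nabla g_0\|^2 + |p-\mathbf A_\delta(q)|^2 + V_\delta(q),
\end{equation*}
where $\mathbf A_\delta(q)$ and $V_\delta(q)\ge 0$ are respectively the mean and the variance of $\mathbf A$ against the probability density $\delta^{-d}|g_0((\cdot-q)/\delta)|^2$. Shifting $p\mapsto p-\mathbf A_\delta(q)$ and carrying out the Gaussian integral in $p$ produces
\begin{equation*}
\tr e^{-tH_\Omega(\mathbf A)}\ge (4\pi t)^{-d/2}\int_{\Omega_\delta}\exp\!\bigl(-t\bigl(\delta^{-2}\|\nabla g_0\|^2+V_\delta(q)\bigr)\bigr)\,dq.
\end{equation*}

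The main obstacle will be to pass to the limit $t\to 0^+$ under the sole hypotheses $\mathbf A\in L_{2,\loc}(\Omega)$ and $|\Omega|<\infty$, that is, without any regularity of $\mathbf A$ or of $\partial\Omega$. I would choose $\delta=\delta(t):=t^{1/2-\epsilon}$ for a small $\epsilon>0$, so that $t\delta^{-2}\|\nabla g_0\|^2=t^{2\epsilon}\|\nabla g_0\|^2\to 0$ while $\delta(t)\to 0$; Lebesgue's differentiation theorem applied to $|\mathbf A|^2\in L_{1,\loc}$ gives $V_{\delta(t)}(q)\to 0$ for a.e.\ $q\in\Omega$; and $|\Omega\setminus\Omega_{\delta(t)}|\to 0$ for every open set of finite measure. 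Since the integrand is bounded pointwise by $1$, dominated convergence yields $\liminf_{t\to 0^+} t^{d/2}\tr e^{-tH_\Omega(\mathbf A)}\ge (4\pi)^{-d/2}|\Omega|$, matching the upper bound. Karamata's Tauberian theorem then concludes the proof.
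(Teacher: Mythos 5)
Your proposal is correct and follows essentially the same route as the paper: Karamata's Tauberian theorem, the diamagnetic upper bound on the heat trace, and a coherent-state (Berezin--Lieb) lower bound with the same Gaussian $p$-integral producing the variance term $V_\delta(q)\geq 0$. The only difference is the final limit: the paper fixes $\delta$, sends $t\to 0$ first (using Jensen and the finiteness of $\int_{\Omega_\delta}\langle|\mathbf{A}|^2\rangle\,dq$, which requires noting that $\overline{\Omega_{\delta/2}}$ is bounded), and then lets $\delta\to0$, whereas your coupling $\delta=t^{1/2-\epsilon}$ with Lebesgue differentiation and dominated convergence is an equally valid, arguably slightly cleaner, way to finish.
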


\begin{proof}
 By the Tauberian theorem (see, e.g., \cite[Thm. 10.3]{Sifunc}) it suffices to prove
\begin{equation}
 \label{eq:weylheat}
\lim_{t\to0} t^{d/2} \tr\exp(-tH_\Omega(\mathbf{A})) = (4\pi)^{-d/2} \, |\Omega| \,.
\end{equation}
It is well-known (see, e.g., \cite[Sec. 2.1]{Da}) that the semi-group of the Dirichlet Laplacian $H_\Omega(0)$ is trace class with $\tr\exp(-tH_\Omega(0)) \leq (4\pi t)^{-d/2} \, |\Omega|$. Using the diamagnetic inequality (which is valid in our setting, e.g., by the approximation argument given in the proof of Theorem \ref{blymagnonsharp}) and \eqref{eq:diamagproof} we infer that $\exp(-tH_\Omega(\mathbf{A}))$ is trace class with $\tr\exp(-tH_\Omega(\mathbf{A}))\leq\tr\exp(-tH_\Omega(0))$. This immediately gives the upper bound in \eqref{eq:weylheat}.

For the lower bound let $\Omega_{\delta}:= \{x\in\Omega: \dist(x,\partial\Omega)>\delta \}$. Since $|\Omega_{\delta}|\to|\Omega|$ as $\delta\to0$ by dominated convergence, it is enough to show that for any $\delta>0$
\begin{equation}
 \label{eq:weylheatlower}
\liminf_{t\to0} t^{d/2} \tr\exp(-tH_\Omega(\mathbf{A})) \geq (4\pi)^{-d/2} \, |\Omega_{\delta}| \,.
\end{equation}
For fixed $\delta>0$, let $g\in C_0^\infty(\R^d)$ be a real-valued function with support in the ball $\{|x|\leq\delta/2\}$ and $\|g\|=1$ and define the coherent states $F_{p,q}(x):= e^{ipx}g(x-q)$ for $p\in\R^d$ and $q\in\Omega_\delta$; see \cite[Chp. 12]{LiLo}. Plancherel's theorem implies the operator inequality in $L_2(\Omega)$
$$
0 \leq \iint_{\R^d\times\Omega_\delta} (\cdot,F_{p,q}) F_{p,q} \,\frac{dp\,dq}{(2\pi)^d} \leq 1
$$
and hence
$$
\tr \exp(-tH_\Omega(\mathbf{A})) \geq \iint_{\R^d\times\Omega_\delta} \left(F_{p,q}, \exp(-tH_\Omega(\mathbf{A})) F_{p,q} \right) \,\frac{dp\,dq}{(2\pi)^d} \,.
$$
The functions $F_{p,q}$ belong to the form domain of $H_\Omega(\mathbf{A})$ and
\begin{align*}
\int_\Omega |(\mathbf{D}-\mathbf{A})F_{p,q}|^2 \,dx = |p|^2 -2 p\,\cdot \langle \mathbf{A} \rangle(q) + \langle |\mathbf{A}|^2 \rangle(q)
+ \|\nabla g\|^2 \,,
\end{align*}
where $\langle \mathbf{A} \rangle(q) := \int_\Omega \mathbf{A}(x) g^2(x-q) \,dx$ and similarly for $\langle |\mathbf{A}|^2 \rangle(q)$ (which are finite for $\mathbf{A}\in L_{2,\loc}(\Omega)$). By Jensen's inequality for the spectral measure of $H_\Omega(\mathbf{A})$ we have
$\left(F_{p,q}, \exp(-tH_\Omega(\mathbf{A})) F_{p,q} \right)\geq \exp(-t \| H_\Omega(\mathbf{A})^{1/2} F_{p,q} \|^2 )$, and therefore
\begin{align*}
 \tr \exp(-tH_\Omega(\mathbf{A})) & \geq e^{-t \|\nabla g\|^2} \int_{\Omega_\delta} e^{-t \langle |\mathbf{A}|^2 \rangle(q)}
 \left( \int_{\R^d} e^{-t\left(|p|^2 -2 p\,\cdot \langle \mathbf{A} \rangle(q) \right)} \,\frac{dp}{(2\pi)^d} \right) \,dq \\
& = (4\pi t)^{-d/2} e^{-t \|\nabla g\|^2} \int_{\Omega_\delta} e^{-t \left(\langle |\mathbf{A}|^2 \rangle(q) - |\langle \mathbf{A} \rangle(q)|^2\right)} \,dq \,.
\end{align*}
Estimating $e^{t|\langle \mathbf{A} \rangle(q)|^2}\geq 1$ and using Jensen's inequality we obtain
$$
\tr \exp(-tH_\Omega(\mathbf{A})) 
\geq (4\pi t)^{-d/2} e^{-t \|\nabla g\|^2} |\Omega_\delta| \exp\left(-t |\Omega_\delta|^{-1} \int_{\Omega_\delta}  \langle |\mathbf{A}|^2 \rangle(q) \,dq \right) \,.
$$
This implies \eqref{eq:weylheatlower} provided $\int_{\Omega_\delta}  \langle |\mathbf{A}|^2 \rangle(q) \,dq <\infty$. To see that this integral is finite, we note that the set $\overline{\Omega_{\delta/2}}$ is bounded (for otherwise there were a sequence $(x_n)\subset \overline{\Omega_{\delta/2}}$ with $|x_n-x_{n-1}|\geq 1$, and then for $\delta<1$, $\Omega$ contained infinitely many disjoint balls $\{ y: |y-x_n|<\delta/2\}$, contradicting $|\Omega|<\infty$). Therefore $\mathbf{A}\in L_2(\overline{\Omega_{\delta/2}})$ and
$$
\int_{\Omega_\delta}  \langle |\mathbf{A}|^2 \rangle(q) \,dq \leq \|g\|_\infty^2 |\Omega_\delta| \int_{\overline{\Omega_{\delta/2}}} |\mathbf{A}(x)|^2 \,dx < \infty \,,
$$
completing the proof of the theorem.
\end{proof}


\bibliographystyle{amsalpha}

\end{document}